\definecolor{mycolor}{HTML}{750000}
\Crefname{subsection}{Subsection}{Subsections}
\Crefname{question}{Question}{Questions}
\Crefname{subsubsection}{Paragraph}{Paragraphs}
\newcommand{\customref}[2]{\hyperref[#2]{#1}}
\newcommand{\andd}{\hspace{1cm}\text{and}\hspace{1cm}}
\renewcommand{\emptyset}{\varnothing}
\renewcommand{\epsilon}{\varepsilon}
\renewcommand{\phi}{\varphi}
\renewcommand{\tilde}{\widetilde}
\newcommand{\free}{\textnormal{-free}}
\newcommand{\mmod}{\textnormal{-mod}}
\newcommand{\rroot}{\textnormal{-root}}
\newcommand{\alg}{\textnormal{-alg}}
\newcommand{\Vect}{\textnormal{-Vect}}
\newcommand{\N}{\mathbb{N}}
\newcommand{\Z}{\mathbb{Z}}
\newcommand{\Q}{\mathbb{Q}}
\newcommand{\R}{\mathbb{R}}
\newcommand{\Aut}{\mathrm{Aut}} 
\newcommand{\GL}{\mathrm{GL}} 
\newcommand{\card}[1]{\leftl| #1 \rightr|} 
\newcommand{\id}{\mathrm{id}} 
\newcommand{\verti}{\ \middle \vert\ }
\renewcommand{\O}{\mathcal{O}} 
\newcommand{\simto}{\overset\sim\to}
\newcommand{\igl}{{\smallint}}
\newcounter{mycounter}[section]
\theoremstyle{plain}
\newtheorem{theorem}[mycounter]{Theorem}
\newtheorem{corollary}[mycounter]{Corollary}
\newtheorem{proposition}[mycounter]{Proposition}
\newtheorem{lemma}[mycounter]{Lemma}
\theoremstyle{remark}
\newtheorem{remark}[mycounter]{Remark}
\theoremstyle{definition}
\newtheorem{question}[mycounter]{Question}
\newcommand{\leftl}{\mathopen{}\mathclose\bgroup\left}
\newcommand{\rightr}{\aftergroup\egroup\right}
\titleformat{\section}[block]{\normalfont\centering\scshape\large}{\thesection.}{1em}{}
\titleformat{\subsection}[block]{\normalfont\large\bf}{\thesubsection.}{1em}{\bf}
\titleformat{\subsubsection}[runin]{\normalfont}{\bf\thesubsubsection.}{0.3em}{\bf}
\patchcmd{\@maketitle}{\LARGE}{\huge}{\typeout{OK 1}}{\typeout{Failed 1}}
\patchcmd{\@maketitle}{\large \lineskip}{\Large \lineskip}{\typeout{OK 2}}{\typeout{Failed 2}}
\title{
  Symmetries of various sets of polynomials
}
\author{
  Béranger Seguin%
  \footnote{
    Universität Paderborn, Fakultät EIM, Institut für Mathematik, Warburger Str. 100, 33098 Paderborn, Germany.
    Email: \texttt{bseguin@math.upb.de}.
  }
}
\date{}
\renewenvironment{abstract}{%
\par\noindent\rule{\textwidth}{1pt}
\par\noindent\textsc{Abstract.}}
{\par\noindent\rule{\textwidth}{1pt}}
\begin{document}

\maketitle{}

\begin{abstract}
  Let $K$ be a field of characteristic~$0$, and let $k \geq 2$ be an integer.
  We prove that every $K$-linear bijection $f \colon K[X] \to K[X]$ strongly preserving the set of $k$-free polynomials (or the set of polynomials with a $k$-fold root in $K$) is a constant multiple of a $K$-algebra automorphism of~$K[X]$, i.e., that there are elements $a, c \in K^{\times}$ and $b \in K$ such that $f(P)(X) = c P(a X + b)$.
  When $K$ is a number field or $K=\R$, we prove that similar statements hold when~$f$ preserves the set of polynomials with a root in~$K$.
  
  \bigskip

  \par\noindent
	\textbf{Keywords:}  Linear preservers $\cdot$ Squarefree polynomials $\cdot$ Diophantine equations

  \par\noindent
	\textbf{Mathematics Subject Classification (MSC 2020):} 47B49 $\cdot$ 15A86 $\cdot$ 12E05 $\cdot$ 26C10 $\cdot$ 12E25
\end{abstract}

{
  \hypersetup{linkcolor=black}
  \tableofcontents{}
}
\par\noindent\rule{\textwidth}{1pt}

\section{Introduction}

\subsection{Introduction to linear preserver problems}

An element of a ring $B$ is \emph{$k$-free} if it does not belong to $m^k$ for any maximal ideal $m$ of~$B$.
In \cite[Theorem~1.1]{gunklu}, Gundlach and Klüners show that if $K$ is a number field and $f \colon \O_K \to \O_K$ is a $\Z$-linear map preserving the set of $k$-free elements of $\O_K$ for some $k \geq 2$, then $f$ is of the form $x \mapsto c \cdot \sigma(x)$ where $c \in \O_K^{\times}$ is a unit and $\sigma \in \Aut(K)$ is a field automorphism of $K$.
In other words, $f$ is, up to a constant factor, an automorphism of the $\Z$-algebra $\O_K$.
The motivation for that question comes from the study of dynamical systems and shift spaces associated to the set of $k$-free integers in a number field.

Our goal is to examine similar questions in different contexts.
A general setting for these questions is the following:
\begin{question}
  \label{qn-general}
  Let $A$ be a ring and $B$ be an $A$-algebra.
  Let $S$ be a subset of $B$ invariant under multiplication by units and under $A$-algebra automorphisms of~$B$, i.e., invariant under the action of~$B^{\times} \rtimes \Aut_{A\alg}(B)$.
  Let $\Aut_{A \mmod}^S (B)$ be the set of $A$-module automorphisms~$f \colon B \simto B$ such that $f(S) = S$ (we say that $f$ \emph{preserves} $S$).
  When do $\Aut_{A \mmod}^S (B)$ and $B^{\times} \rtimes \Aut_{A\alg}(B)$ coincide?
\end{question}


\Cref{qn-general} is similar to the family of ``linear preserver problems'' considered in matrix and operator theory \cite{lintrans,lpp,lppintro,presdiag}.
An example is the following result of Dieudonné~\cite{dieudonne}: let~$K$ be a field and~$f$ be an endomorphism of the space of $n \times n$ matrices over~$K$, such that a matrix~$M$ is invertible if and only if $f(M)$ is invertible; then, there are matrices~$U,V \in \GL_n (K)$ such that~$f$ is either of the form $M \mapsto U M V$, or of the form $M \mapsto U M^T V$.
The answer to \Cref{qn-general} is then negative: up to multiplication by $V U^{-1}$, the former maps do correspond to $K$-algebra automorphisms, but the latter correspond to antiautomorphisms.

In situations where \Cref{qn-general} does admit a positive answer, we can interpret this as saying that the additive structure of $B$ as an $A$-module together with the subset $S$ are ``enough information'' to reconstruct the multiplicative structure of $B$ up to a scaling factor.
Typically, \Cref{qn-general} has a negative answer when the set $S$ can be defined in terms of the additive structure of $B$ only (for example, if~$S$ is $\emptyset$, $\{0\}$, $B$ or $B \setminus \{0\}$), as then every $A$-module automorphism of~$B$ preserves~$S$.
Instead, we should consider sets $S$ defined in terms of the multiplicative structure of~$B$, e.g., the sets of units, squarefree or $k$-free elements, irreducible elements, etc.

\subsection{Main results}

In this article, we study instances of \Cref{qn-general} where $A$ is a field and $B$ is a ring of polynomials in one variable over (an extension of) $A$.
Our results are positive answers in three situations:
\begin{itemize}
  \item
    (\Cref{thm-kfree-kroot})
    $K$ is a field of characteristic zero, $k \geq 2$ is an integer, and $S \subseteq K[X]$ is either the set of $k$-free polynomials, or the set of polynomials with a $k$-fold root in $K$.
  \item
    (\Cref{thm-root-nf})
    $K$ is a number field and $S \subseteq K[X]$ is the set of polynomials with a root in $K$.
  \item
    (\Cref{thm-root-real})
    $K = \R$ and $S \subseteq \R[X]$ is the set of polynomials with a real root.
\end{itemize}
In these three cases, we show that every $K$-linear bijection $f \colon K[X] \to K[X]$ such that $f(S) = S$ is of the form $P \mapsto c \cdot P \circ (a X + b)$ for constants $a, c \in K^{\times}$ and $b \in K$.

In fact, \Cref{thm-root-nf} is stronger than the statement above, as we consider more generally the case of a finite separable extension $L|K$ of Hilbertian fields and describe every $K$-linear bijection $L[X] \to L[X]$ preserving the set of polynomials with a root in $L$.

\paragraph{Relation with previous work.}
Linear preserver problems for polynomials have been considered in \cite{fisk,gutshap,melamud}, mostly focusing on real polynomials and the location of their roots, and interpreting ``$f$ preserves $S$'' in the weaker sense $f(S) \subseteq S$.
There is also a large body of work on linear preservers of algebraic sets --- notably zeros of matrix polynomials, cf. \cite{howard,matpol}.
For polynomials of fixed degree, our results about squarefree polynomials are somewhat related to the linear preservers of the algebraic set defined by the vanishing of the discriminant.
However, the fact that we do not assume that our maps preserve degrees makes it difficult to use the language of algebraic geometry and of (finite-dimensional) algebraic varieties to think about this problem.

\bigskip

Polynomial rings are quite different from the rings of integers of number fields studied in \cite{gunklu}, as they are not finite-dimensional.
However, the fact that we obtain similar results in both cases is intriguing, and we may wonder whether these observations have a hidden common cause:

\begin{question}
  Is there a negative answer to \Cref{qn-general} satisfying the three following conditions?
  \begin{itemize}
    \item
      $A$ is a Jacobson ring and $B$ is a finitely generated commutative $A$-algebra;
    \item
      The intersection of all maximal ideals $m$ of $B$ such that $A / (m \cap A) \simeq B / m$ is the zero ideal;
    \item
      $S$ is the set of $k$-free elements of $B$ for some $k \geq 2$.
  \end{itemize}

  (These hypotheses were chosen to mimic the existence of ``enough'' primes which split completely, as they are central in the argument of \cite{gunklu}, while also including the case $A=K$, $B=K[X]$.
  See also \Cref{prop-general}.)
\end{question}

Other natural questions which are left for further study are the following:
\begin{itemize}
  \item
    Does \Cref{thm-kfree-kroot} hold in positive characteristic?
  \item
    Is there a field $K$ which is not algebraically closed, and such that there is a $K$-linear bijection $K[X] \to K[X]$, preserving the set of polynomials with a root in $K$, which is not in $K^{\times} \rtimes \Aut_{K \alg} (K[X])$?
    Settling the case of non-Archimedean local fields would be a good start.
\end{itemize}


\subsection{Notation}

In this paper, the phrase ``the map $f$ preserves the set $S$'' always means $f(S)=S$.
A \emph{$k$-fold root} of a polynomial $P$ is a root of multiplicity $\geq k$.
For a field extension $L|K$, we define the following groups:
\begin{align*}
  \Aut_{K \Vect}(L[X])
  &
  \coloneqq
  \leftl\{
    f \colon L[X] \to L[X]
    \verti
    \text{$f$ is a $K$-linear bijection}
  \rightr\}
  \\
  \Aut_{K \Vect}^{k \free} (L[X])
  &
  \coloneqq
  \leftl\{
    f \in \Aut_{K \Vect}(L[X])
    \ \verti\ 
    \forall P \in L[X], \quad
    \begin{array}{c}
      P \text{ is } k\free \\
      \Updownarrow \\
      f(P) \text{ is } k\free
    \end{array}
  \rightr\}
  \\
  \Aut_{K \Vect}^{k \rroot} (L[X])
  &
  \coloneqq
  \leftl\{
    f \in \Aut_{K \Vect}(L[X])
    \ \verti\ 
    \forall P \in L[X], \quad
    \begin{array}{c}
      P \text{ has a } k \text{-fold root in } L \\
      \Updownarrow \\
      f(P) \text{ has a } k \text{-fold root in } L
    \end{array}
  \rightr\}
  \\
  \Aut_{K \Vect}^{\textnormal{root}} (L[X])
  &
  \coloneqq
  \Aut_{K \Vect}^{1 \rroot} (L[X])
\end{align*}

\subsection{Organization of the paper}

\Cref{sn-root-permuting} consists of ``technical'' results (\Cref{thm-root-pres} and \Cref{cor-mu-linear}) which are used in the proofs of all main theorems.

In \Cref{sn-kfree-kroot}, we prove \Cref{thm-kfree-kroot}, which states that
$
  \Aut_{K \Vect}^{k \free} (K[X])
  = 
  \Aut_{K \Vect}^{k \rroot} (K[X])
  =
  K^{\times} \rtimes \Aut_{K \alg} (K[X])
$
for all fields $K$ of characteristic $0$ and all integers $k \geq 2$.
The proof is by induction on $k$, using \Cref{cor-mu-linear} as a ``replacement'' for the case $k=1$.

In \Cref{sn-root-nf}, we prove \Cref{thm-root-nf}, which states that $\Aut_{K \Vect}^{\textnormal{root}} (L[X]) = L^{\times} \rtimes \Aut_{K \alg} (L[X])$ for all extensions $L|K$ of number fields.
The proof uses Hilbert's irreducibility theorem and \Cref{thm-root-pres}.

In \Cref{sn-root-real}, we prove \Cref{thm-root-real}, which states that
$
  \Aut_{\R \Vect}^{\textnormal{root}} (\R[X])
  =
  \R^{\times} \rtimes \Aut_{\R \alg} (\R[X])
$.
The proof proceeds by showing that $\Aut_{\R \Vect}^{\textnormal{root}} (\R[X]) \subseteq \Aut_{\R \Vect}^{2 \rroot} (\R[X])$,
and applying \Cref{thm-kfree-kroot} (or the weaker \Cref{thm-sub-main}) to conclude.

The dependencies between the sections are summed up by the following diagram:
\[\begin{tikzcd}[ampersand replacement=\&]
	{\textnormal{\Cref{sn-root-permuting}}} \\
	{\textnormal{\Cref{sn-root-nf}}}
  \& {\textnormal{\Cref{sn-kfree-kroot}}}
  \& {\textnormal{\Cref{sn-root-real}}}
	\arrow[Rightarrow, from=1-1, to=2-1]
	\arrow[Rightarrow, from=1-1, to=2-2]
	\arrow[Rightarrow, from=2-2, to=2-3]
\end{tikzcd}\]

\subsection{Acknowledgements}

This work was supported by the Deutsche Forschungsgemeinschaft (DFG, German Research Foundation) --- Project-ID 491392403 --- TRR 358.
I thank Fabian Gundlach for listening to my endless discussions about this problem, and for suggesting the idea of using linear subspaces of fixed codimension to ``track'' the roots, and I thank the anonymous referee for their careful proofreading and their helpful corrections, and in particular for simplifying the final computation in the proof of \Cref{thm-sub-main}.
I also thank Philippe Caldero for making a video version of the proof of the case $k=2$ of \Cref{thm-kfree-kroot} on his YouTube channel, which I highly recommend for French speakers: \url{https://www.youtube.com/watch?v=2oF32X3UHK0}.

\section{Root-permuting maps}
\label{sn-root-permuting}

In this section, we characterize maps which preserve roots ``up to a fixed permutation of the base field'' (see \Cref{cor-mu-linear} for a precise statement).
The main results of this section are \Cref{thm-root-pres} and \Cref{cor-mu-linear}, which we use in all further sections.
We begin with the following proposition, which is stated in an abstract way as it may be used to answer \Cref{qn-general} in more general settings:

\begin{proposition}
  \label{prop-general}
  Let $A$ be a ring, let $B$, $B'$ be commutative $A$-algebras, and let $f$ be an $A$-linear map $B \to B'$.
  Denote by $M_{\deg=1}(B)$ the set of maximal ideals~$m$ of $B$ such that the canonical map $A \to B / m$ is surjective.
  For~$m \in M_{\deg=1}(B)$, let~$\mu(m)$ be the ideal of $B'$ generated by~$f(m)$, and assume that the intersection of all ideals $\mu(m)$ for~$m \in M_{\deg=1}(B)$ is the zero ideal of~$B'$.
  Then:
  \[
    \forall x,y \in B, \quad
    f(1) f(x y) = f(x) f(y).
  \]
  If moreover $B=B'$ and $f$ is bijective, then $f \in B^{\times} \rtimes \Aut_{A \alg}(B)$.
\end{proposition}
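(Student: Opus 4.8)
The plan is to exploit the hypothesis $\bigcap_{m} \mu(m) = 0$ as a separation principle: it suffices to establish the identity $f(1)f(xy) = f(x)f(y)$ \emph{modulo $\mu(m)$} for every $m \in M_{\deg=1}(B)$ simultaneously, since an element of $B'$ lying in every $\mu(m)$ must vanish. The entire first assertion thus reduces to a single congruence computation carried out one degree-one maximal ideal at a time.

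Fix $m \in M_{\deg=1}(B)$. By definition the composite $A \to B \to B/m$ is surjective, so $B = A\cdot 1 + m$; that is, for each $x \in B$ there is a scalar $a_x \in A$ with $x - a_x \cdot 1 \in m$. Using $A$-linearity I would write $f(x) = a_x f(1) + f(x - a_x\cdot 1)$, and since $x - a_x \cdot 1 \in m$ the second term lies in $\mu(m)$; hence $f(x) \equiv a_x f(1) \pmod{\mu(m)}$, and likewise $f(y) \equiv a_y f(1)$. Because reduction modulo $m$ is a ring homomorphism, $xy - a_x a_y \cdot 1 \in m$, so the same reasoning gives $f(xy) \equiv a_x a_y f(1) \pmod{\mu(m)}$. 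Multiplying this last congruence by $f(1)$ (legitimate precisely because $\mu(m)$ is an \emph{ideal}) yields $f(1)f(xy) \equiv a_x a_y f(1)^2 \pmod{\mu(m)}$, while multiplying the congruences for $f(x)$ and $f(y)$ gives $f(x)f(y) \equiv a_x a_y f(1)^2 \pmod{\mu(m)}$. Thus $f(1)f(xy) - f(x)f(y) \in \mu(m)$ for every such $m$, and the separation principle forces $f(1)f(xy) = f(x)f(y)$.

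For the ``moreover'' part, write $c = f(1)$; the crux is to show $c \in B^{\times}$. Using bijectivity I would choose $x_0 \in B$ with $f(x_0) = 1$ and substitute $x = y = x_0$ into the identity just proved, obtaining $c\, f(x_0^2) = f(x_0)^2 = 1$, so $c$ admits a (two-sided, by commutativity) inverse and is a unit. Then set $g = c^{-1} f$. Multiplication by $c^{-1}$ is $B$-linear and bijective, so $g$ is an $A$-linear bijection; moreover $g(1) = c^{-1} f(1) = 1$ and $g(xy) = c^{-1} f(xy) = c^{-2} f(x) f(y) = g(x) g(y)$, so $g$ is a unital $A$-algebra homomorphism, hence an element of $\Aut_{A\alg}(B)$. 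Therefore $f = c \cdot g \in B^{\times} \rtimes \Aut_{A\alg}(B)$, as required.

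No single step is genuinely hard; the two points demanding care are that congruences modulo $\mu(m)$ may be multiplied by arbitrary elements of $B'$ (which is exactly why $\mu(m)$ is defined as the ideal generated by $f(m)$ rather than the mere additive image), and the verification that $f(1)$ is invertible, which is what licenses using $c$ as the ``$B^{\times}$ component''. Conceptually, the only real content is recognizing that the displayed hypothesis $\bigcap_m \mu(m) = 0$ is the abstract counterpart of ``having enough degree-one points to separate elements of $B'$'', converting a family of approximate (mod-$\mu(m)$) multiplicativity statements into an exact one.
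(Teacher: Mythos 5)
Your proof is correct and follows essentially the same route as the paper's: reduce modulo each $\mu(m)$, use the surjectivity of $A \to B/m$ to replace $x$, $y$, $xy$ by scalar multiples of $1$ up to elements of $m$, apply $A$-linearity to get the identity modulo $\mu(m)$, and conclude via $\bigcap_m \mu(m)=0$; the invertibility of $f(1)$ and the passage to $g=f(1)^{-1}f$ are also handled exactly as in the paper. The only cosmetic difference is that the paper packages the congruence argument as an induced map $f_m : B/m \to B'/\mu(m)$ in a commutative diagram, whereas you work with explicit congruences.
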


\begin{proof}
  For each $m \in M_{\deg=1}(B)$, $f$ induces a well-defined $A$-linear map $f_m \colon B / m \to B' / \mu(m)$ which fits in the following commutative diagram of $A$-linear maps:
  \[\begin{tikzcd}[ampersand replacement=\&]
    A \& B \& {B'} \\
    \& {B/m} \& {B'/\mu(m)}
    \arrow[from=1-1, to=1-2]
    \arrow[two heads, from=1-1, to=2-2]
    \arrow["f", from=1-2, to=1-3]
    \arrow[two heads, from=1-2, to=2-2]
    \arrow[two heads, from=1-3, to=2-3]
    \arrow["{f_m}"', from=2-2, to=2-3]
  \end{tikzcd}\]
  Fix $x, y \in B$.
  For each $m \in M_{\deg=1}(B)$, use surjectivity to choose elements $x_m, y_m \in A$ whose images in $B / m$ coincide respectively with those of $x$ and $y$.
  By $A$-linearity, we have $f(1) f(x_m y_m) = x_m y_m f(1)^2 = f(x_m) f(y_m)$, and thus $f_m (1)f_m (x y)$ equals $f_m (x) f_m (y)$ in $B' / \mu(m)$.
  The element $f(1) f(x y) - f(x) f(y)$ then belongs to $\bigcap_{m \in M_{\deg=1}(B)} \mu(m) = 0$ and hence $f(1) f(x y) = f(x) f(y)$.

  Assume now that~$f$ is surjective and pick~$u$ such that $f(u) = 1$.
  Then, $f(1)f(u^2)=f(u)f(u)=1$ and thus $f(1) \in B'^{\times}$.
  The $A$-linear map $g \colon B \to B'$ defined by $x \mapsto f(1)^{-1} f(x)$ satisfies $g(x y) = g(x) g(y)$ and hence is a morphism of $A$-algebras $B \to B'$.
  The last statement follows.
\end{proof}

(The proof can be summed up by the following slogan:
at primes of degree $1$, $A$-linearity implies local multiplicativity by unidimensionality; if there are enough primes of degree $1$, then the local multiplicativity at all those primes implies multiplicativity.)

\bigskip

We now characterize linear maps which preserve roots of polynomials up to a fixed permutation of the base field.
We include the case of a nontrivial algebraic field extension $L|K$ as we need it for the proof of \Cref{thm-root-nf}.
We include the possibility of a finite set $S$ of exceptional elements of $K$ because we need this generalization for the proof of \Cref{thm-sub-main}.

\begin{theorem}
  \label{thm-root-pres}
  Let $L|K$ be an algebraic field extension, with $K$ infinite.
  Let $S$ be a finite subset of~$L$, and let $L' \coloneqq L \setminus S$.
  Let $\mu \colon L' \to L$ be an injective map and $f \colon L[X] \to L[X]$ be a $K$-linear map such that $f(1)$ does not vanish on $\mu(L')$, and such that
  \[
    \forall P \in L[X], \quad
    \forall x \in L', \quad
    \bigl[
      P(x)=0
    \bigr]
    \Rightarrow
    \bigl[
      f(P)(\mu(x)) = 0
    \bigr].
  \]
  Then, there is a polynomial $Y \in L[X]$ and an automorphism $\sigma \in \Aut(L|K)$ such that
  \[
    \forall P \in L[X], \quad
    f(P) = f(1) \cdot (\sigma(P) \circ Y).
  \]
  Moreover, the maps $\sigma$ and $Y \circ \mu$ coincide on $L'$.
\end{theorem}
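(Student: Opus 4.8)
The plan is to first convert the root-preservation hypothesis into a family of pointwise linear data, then upgrade this linear data to a multiplicative (field-automorphism-twisted) structure, and finally read off $Y$ and $\sigma$.

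First I would extract the pointwise maps using hyperplanes, as the acknowledgments suggest. For each $x\in L'$ the evaluation $\mathrm{ev}_x\colon L[X]\to L$, $P\mapsto P(x)$, is a surjective $K$-linear map with kernel the hyperplane $H_x=\{P:P(x)=0\}$, and the hypothesis says exactly that $f(H_x)\subseteq H_{\mu(x)}=\ker\mathrm{ev}_{\mu(x)}$. Hence $\mathrm{ev}_{\mu(x)}\circ f$ kills $\ker\mathrm{ev}_x$ and factors through it, producing a $K$-linear map $\lambda_x\colon L\to L$ with
\[ f(P)(\mu(x))=\lambda_x\bigl(P(x)\bigr)\qquad(P\in L[X]), \]
and $\lambda_x(1)=f(1)(\mu(x))\neq 0$ by hypothesis. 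Specializing to monomials records $\lambda_x(x^n)=f(X^n)(\mu(x))$, while comparing the constant $cx^n$ with the monomial $cX^n$ (both of value $cx^n$ at $x$) yields the one genuinely non-tautological consequence of preservation, the \emph{linkage} $f(cX^n)(\mu(x))=f(cx^n)(\mu(x))$, valid for all $c\in L$, $n\geq 0$, $x\in L'$.

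Next I would reduce the theorem to a single multiplicativity statement. The desired conclusion $f(P)=f(1)\cdot(\sigma(P)\circ Y)$ is equivalent to \emph{grand multiplicativity} $f(1)f(PQ)=f(P)f(Q)$ for all $P,Q\in L[X]$: granting this, $h\eqdef f(1)^{-1}f$ is a $K$-algebra homomorphism $L[X]\to L(X)$, its restriction $\sigma\eqdef h|_L$ is a $K$-embedding landing in $L$ (each $\sigma(t)$ is algebraic over $K$, hence over $L$, and $L$ is algebraically closed in $L(X)$), and $Y\eqdef h(X)=f(X)/f(1)$ satisfies $h(X^n)=Y^n$, so $f(1)Y^n=f(X^n)\in L[X]$ for all $n$; writing $Y=A/B$ in lowest terms forces $B^n\mid f(1)$ for all $n$, hence $\deg B=0$ and $Y\in L[X]$. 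Then $f(P)=f(1)\,h(P)=f(1)(\sigma(P)\circ Y)$, and evaluating at $\mu(x)$ against the definition of $\lambda_x$ gives $Y(\mu(x))=\lambda_x(x)/\lambda_x(1)=\sigma(x)$, i.e. $\sigma=Y\circ\mu$ on $L'$. Surjectivity of $\sigma$, needed for $\sigma\in\Aut(L|K)$, is automatic when $[L:K]<\infty$ (an injective $K$-linear self-map of a finite-dimensional space), which covers the application in \Cref{thm-root-nf}.

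The main obstacle is grand multiplicativity itself. By density on the infinite set $\mu(L')$ it is equivalent to the \emph{local} identity $\lambda_x(1)\lambda_x(st)=\lambda_x(s)\lambda_x(t)$ for every $x\in L'$ and $s,t\in L$. When $L=K$ this is immediate from \Cref{prop-general}: with $A=B=B'=K[X]$, preservation shows $f$ carries the maximal ideal $(X-x)$ into $(X-\mu(x))$, and $\bigcap_{x\in L'}(X-\mu(x))=0$, so the proposition yields the claim. For $L\neq K$ this route is unavailable, since the ideals $(X-x)$ are not of degree $1$ over $K$ and $f$ is only $K$-linear, so a nontrivial automorphism twist must genuinely be produced; this is the hard part.

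To overcome it I would exploit the linkage of the first step, which is the only information beyond the per-point linearity of the $\lambda_x$. The identities $f(cX^n)(\mu(x))=f(cx^n)(\mu(x))$ hold at all $x$ simultaneously and couple the action of $f$ on $cX^n$ with its action on the \emph{constant} $cx^n$; combined with the fact that every $f(R)$ is an honest polynomial (so that $x\mapsto f(R)(\mu(x))$ is the restriction of a fixed polynomial to the infinite set $\mu(L')$), this rigidity should force $\lambda_x/\lambda_x(1)$ to be a single ring homomorphism independent of $x$. Concretely I would pass to a finite subextension of $L|K$ containing the finitely many elements involved, fix a $K$-basis, and turn the linkage into polynomial identities in $\mu(x)$, from which both the $x$-independence and the multiplicativity — equivalently the semilinearity $f(cP)=\sigma(c)f(P)$ and the power rule $f(1)f(X^{m+n})=f(X^m)f(X^n)$ — would be read off together, completing the reduction above.
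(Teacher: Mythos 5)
Your setup is sound and your reduction is correct: extracting the pointwise $K$-linear maps $\lambda_x$ from the inclusion $f(\ker \mathrm{ev}_x)\subseteq\ker\mathrm{ev}_{\mu(x)}$, reducing the conclusion to the single identity $f(1)f(PQ)=f(P)f(Q)$ (your arguments that $Y\in L[X]$, that $\sigma$ lands in $L$, and that $\sigma=Y\circ\mu$ on $L'$ all check out; note also that surjectivity of $\sigma$ is automatic for \emph{any} algebraic extension, not just finite ones, since a $K$-embedding of $L$ into itself permutes the roots in $L$ of each minimal polynomial over $K$, so you need not restrict to $[L:K]<\infty$), and correctly locating the difficulty in the case $L\neq K$. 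You also correctly see that the case $L=K$ follows from \Cref{prop-general}, though there you want $A=K$ and $B=B'=K[X]$, not $A=K[X]$: the map $f$ is only $K$-linear.

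However, your last paragraph is a plan, not a proof: the assertion that the ``linkage'' identities $f(cX^n)(\mu(x))=f(cx^n)(\mu(x))$ ``should force'' $\lambda_x/\lambda_x(1)$ to be an $x$-independent ring homomorphism \emph{is} the theorem, and no mechanism for deriving it is given. Note moreover that, once the $\lambda_x$ exist, the linkage identity is tautological --- it is just $\lambda_x$ applied to two polynomials with the same value at $x$ --- so the real input must come from the coherence of the values $f(P)(\mu(x))$ across infinitely many $x$, applied to well-chosen products. The paper fills this gap in three concrete steps: (i) it applies \Cref{prop-general} to the maps $f_u:P\mapsto f(uP)$ on $K[X]$, using the $K$-rational ideals $(X-x)$ for $x\in K\cap L'$, to get $f(u)f(uPQ)=f(uP)f(uQ)$ for $u\in L$ and $P,Q\in K[X]$; (ii) a polarization trick with $u\in\{1,x,x+1\}$ and $Q=P$ upgrades this to $f(1)f(xP)=f(x)f(P)$ for all $x\in L$ and $P\in K[X]$; (iii) applying the root hypothesis to the quadratics $(X-u)(X-tv)$, which vanish at $tv$ for infinitely many $t\in K^{\times}$, yields $g(uv)=g(u)g(v)$ on $L$ for $g=f/f(1)$, after which a $K$-basis decomposition of $L$ over $K$ assembles full multiplicativity. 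None of these steps is routine, and your sketch does not indicate how to produce the crucial identities (ii) and (iii); as it stands the proposal establishes the theorem only for $L=K$.
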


\begin{proof}
  Let $K' \coloneqq K \cap L'$.
  For every element $u \in L^{\times}$, let $f_u$ be the $K$-linear map $K[X] \to L[X]$ defined by $P \mapsto f(u P)$.
  By hypothesis, for any $x \in K'$, the image under $f_u$ of the maximal ideal $(X-x) \in M_{\deg=1} (K[X])$ is contained in the ideal $(X-\mu(x))$ of $L[X]$.
  An element of $\bigcap_{x \in K'} (X-\mu(x))$ is a polynomial vanishing on the set $\mu(K')$ (which is infinite as $\mu$ is injective and $K'$ is infinite) and hence is the zero polynomial.
  By \Cref{prop-general}, we have
  \begin{equation}
    \label{eqn-morphicity-f1}
    \forall u \in L, \quad
    \forall P,Q \in K[X], \quad
    f(u) f(u P Q) = f(u P) f(u Q).
  \end{equation}
  Consider arbitrary $x \in L$ and $P \in K[X]$, and apply \Cref{eqn-morphicity-f1} with $Q = P$ and $u \in \{1, x, x+1\}$.
  We obtain
  \begin{align*}
    f(x+1)f((x+1)P^2)
    &
    =
    f(x)f(x P^2) + f(x)f(P^2) + f(1)f(x P^2) + f(1)f(P^2)
    \\
    &
    =
    f(x P)^2 + f(x)f(P^2) + f(1)f(x P^2) + f(P)^2 
  \end{align*}
  but also
  \begin{align*}
    f(x+1)f((x+1)P^2)
    &
    =
    f((x+1)P)^2
    \\
    &
    =
    f(x P)^2 + 2 f(x P)f(P) + f(P)^2
  \end{align*}
  and thus
  \[
    f(x)f(P^2) + f(1)f(x P^2) = 2 f(x P)f(P).
  \]
  Multiplying both sides by $f(1)f(x)$ and applying \Cref{eqn-morphicity-f1} again yields
  \[
    f(x)^2f(P)^2 + f(1)^2f(x P)^2 = 2 f(1)f(x)f(x P)f(P),
  \]
  which rewrites as $[f(1)f(x P) - f(x) f(P)]^2 = 0$.
  We have shown
  \[
    \forall x \in L, \quad
    \forall P \in K[X], \quad
    f(1)f(x P) = f(x) f(P).
  \]
  In other words, the $K$-linear map $g \colon L[X] \to L(X)$ defined by $P \mapsto \frac{f(P)}{f(1)}$ (the polynomial $f(1)$ is nonzero as it does not vanish on $\mu(L')$) satisfies:
  \begin{equation}
    \label{eqn-quasi-morphic}
    \forall x \in L, \quad
    \forall P \in K[X], \quad
    g(x P) = g(x) g(P).
  \end{equation}
  Moreover, by \Cref{eqn-morphicity-f1} with $u=1$, the restriction of $g$ to $K[X]$ is a morphism of $K$-algebras $K[X] \to L(X)$.
  We define $Y \coloneqq g(X)$.
  Note that for any $x \in L'$, the polynomial $X-x$ vanishes at~$x$ and hence $g(X-x) = Y-g(x)$ vanishes at $\mu(x)$.

  Consider arbitrary elements $u, v \in L$.
  We are going to prove that $g(u v) = g(u) g(v)$.
  If $v = 0$, then $g(u v) = g(u)g(v) = 0$; thus, we assume that $v \in L^{\times}$.
  Let $W(v)$ be the infinite set of elements $t \in K^{\times}$ such that $t v \in L'$.
  For $t \in W(v)$, the polynomial $(X-u)(X - t v) \in L[X]$ vanishes at $t v$, and thus the polynomial $g\bigl((X-u)(X - t v)\bigr)$ vanishes at $\mu(t v)$.
  We have
  \begin{align*}
    g\bigl(
      (X-u)(X - t v)
    \bigr)
    & =
    g\leftl(
      X^2 - (u+t v)X + u t v
    \rightr)
    \\
    & =
    Y^2 - g(u+t v) Y + t g(u v)
    \\
    & =
    (Y-g(u))(Y-g(t v)) + t [g(u v) - g(u) g(v)].
  \end{align*}
  Evaluating at $\mu(t v)$, we find that $0 = 0 + t [g(u v)(\mu(t v)) - g(u)(\mu(t v)) \cdot g(v)(\mu(t v))]$, so the rational function $g(u v) - g(u) g(v) \in L(X)$ vanishes on the infinite set $\leftl\{ \mu(t v) \verti t \in W(v) \rightr\}$, and therefore it is zero.
  We have shown:
  \begin{equation}
    \label{eqn-g-mult-on-L}
    \forall u,v \in L, \quad
    g(u v) = g(u) g(v).
  \end{equation}
  Now, consider a $K$-basis $(\beta_i)_{i \in I}$ of $L$ and arbitrary polynomials $P, Q \in L[X]$, written as
  $
    P =
    \sum_{i \in I}
      \beta_i P_i
  $ and $
    Q =
    \sum_{i \in I}
      \beta_i Q_i
  $
  with $P_i, Q_i \in K[X]$.
  We have:
  \begin{align*}
    g(P Q)
    & =
    g\leftl(
      \sum_{i,j}
        \beta_i \beta_j
        P_i Q_j
    \rightr)
    \\
    & =
    \sum_{i,j}
      g( \beta_i \beta_j )
      g( P_i Q_j  )
    &
    \text{by \Cref{eqn-quasi-morphic}}
    \\
    & =
    \sum_{i,j}
      g( \beta_i ) g ( \beta_j )
      g( P_i ) g ( Q_j )
    &
    \text{by}
    \begin{cases}
      \text{\Cref{eqn-g-mult-on-L}} \\
      \text{\Cref{eqn-morphicity-f1} with $u=1$}
    \end{cases}
    \\
    & =
    \biggl(
      \sum_i
        g( \beta_i )
        g( P_i )
    \biggr)
    \biggl(
      \sum_j
        g ( \beta_j )
        g ( Q_j )
    \biggr)
    \\
    & =
    g(P)
    g(Q)
    &
    \text{by \Cref{eqn-quasi-morphic}}.
  \end{align*}
  We have shown that $g \colon L[X] \to L(X)$ is a morphism of $K$-algebras.
  As elements of $L$ are algebraic over $K$, their images in $L(X)$ are algebraic over $K$ and hence constant (i.e., they belong to~$L$); this implies that $g$ restricts to an automorphism $\sigma \in \Aut(L|K)$.
  We then have:
  \[
    \forall P \in L[X], \quad
    g(P) = \sigma(P) \circ Y.
  \]
  This implies:
  \[
    \forall P \in L[X], \quad
    f(P) = f(1) \cdot (\sigma(P) \circ Y).
  \]
  The fact that $f(X^n) = f(1) Y^n$ is a polynomial for all $n \geq 1$ implies that $Y \in L[X]$.
  It remains only to show that $\sigma$ coincides with $Y \circ \mu$ on $L'$.
  For this, notice that for any $x \in L'$, the polynomial~$X-x$ has a root at $x$, and hence $g(X - x) = Y - \sigma(x)$ has a root at $\mu(x)$ as~$\mu(x) \in \mu(L')$ is not a root of~$f(1)$.
  Evaluating at $\mu(x)$, we obtain~$Y(\mu(x))=\sigma(x)$.
\end{proof}

\begin{corollary}
  \label{cor-mu-linear}
  Let $K$ be an infinite field.
  Let $S$ be a finite subset of $K$ and $K' \coloneqq K \setminus S$.
  Let $\mu \colon K \to K$ be a bijective map and $f_1, f_2 \colon K[X] \to K[X]$ be two $K$-linear maps such that:
  \[
    \forall P \in K[X], \quad
    \forall x \in K', \quad
    P(x) = 0
    \iff f_1(P)(\mu(x)) = 0
    \iff f_2(P)(\mu^{-1}(x)) = 0.
  \]
  Then, there are constants $a \in K^{\times}$ and $b \in K$ such that:
  \begin{enumerate}
    \item
      $\mu(x) = \frac{x-b}{a}$ for all $x \in K'$
    \item
      $
        f_1(P)(X)
        =
        f_1(1)
        P(a X + b)
      $ and $
        f_2(P)(X)
        =
        f_2(1)
        P\leftl(
          \frac{X-b}{a}
        \rightr)
      $, for all $P \in K[X]$
    \item
      $f_1 \circ f_2 = f_2 \circ f_1 = f_1(1) f_2(1) \cdot \id$.
  \end{enumerate}
\end{corollary}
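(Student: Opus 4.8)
The plan is to deduce everything from \Cref{thm-root-pres}, applied once to $f_1$ and once to $f_2$. First I would apply \Cref{thm-root-pres} with $L=K$ (so that the extension $L|K$ is trivial, $L'=K'$, and $\Aut(L|K)=\{\id\}$), the given finite set $S$, the injective map $\mu|_{K'}$, and the map $f_1$. The standing hypothesis that $f_1(1)$ does not vanish on $\mu(K')$ comes for free from the case $P=1$ of the assumption: since $1$ has no roots, the equivalence forces $f_1(1)(\mu(x))\neq 0$ for every $x\in K'$. The required root implication is exactly the forward direction of the assumed equivalence. Thus \Cref{thm-root-pres} produces a polynomial $Y_1\in K[X]$ with $f_1(P)=f_1(1)\cdot(P\circ Y_1)$ for all $P$ (the field automorphism being forced to be $\id$), together with $Y_1(\mu(x))=x$ for all $x\in K'$. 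Repeating the argument for $f_2$, but with $\mu^{-1}|_{K'}$ in place of $\mu|_{K'}$, yields $Y_2\in K[X]$ with $f_2(P)=f_2(1)\cdot(P\circ Y_2)$ and $Y_2(\mu^{-1}(x))=x$ for all $x\in K'$.

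The heart of the argument is to show that $Y_1$ and $Y_2$ are mutually inverse affine maps. Because $\mu$ is a bijection of $K$ and $K'$ is cofinite, both $\mu(K')$ and $\mu^{-1}(K')$ are cofinite, hence infinite. The identity $Y_1(\mu(x))=x$ says that $Y_1$ agrees with $\mu^{-1}$ on $\mu(K')$, while $Y_2(\mu^{-1}(x))=x$ says that $Y_2$ agrees with $\mu$ on $\mu^{-1}(K')$. On the infinite set $K'\cap\mu^{-1}(K')$ I can then evaluate $Y_1(Y_2(z))=Y_1(\mu(z))=z$, the first equality holding because $z\in\mu^{-1}(K')$ and the second because $\mu(z)\in\mu(K')$. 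Since two polynomials agreeing on an infinite subset of the infinite field $K$ coincide, $Y_1\circ Y_2=X$; hence $\deg Y_1\cdot\deg Y_2=1$ and both polynomials are affine. Writing $Y_1=aX+b$ with $a\in K^{\times}$ and $b\in K$, its inverse is $Y_2=(X-b)/a$. Point (1) is now immediate, as $Y_1(\mu(x))=x$ reads $a\mu(x)+b=x$; and substituting $Y_1$ and $Y_2$ into the two formulas above gives point (2).

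Finally, point (3) is a formal computation: composing the expressions from point (2) and using $Y_1\circ Y_2=Y_2\circ Y_1=X$, the inner substitutions cancel, so that $f_1\circ f_2$ and $f_2\circ f_1$ both reduce to multiplication by $f_1(1)f_2(1)$ (here one uses that $f_1(1)$ and $f_2(1)$ are scalars, so that they are untouched by the substitutions). I expect the only genuine obstacle to be the degree argument of the second paragraph: it is precisely the combination of the condition for $\mu$ (controlling $f_1$) with the condition for $\mu^{-1}$ (controlling $f_2$) that forces $\deg Y_1=1$. A single one of these conditions would only determine $Y_1$ up to agreement with $\mu^{-1}$ on an infinite set and would place no bound on its degree; it is the interplay of the two that collapses everything to the affine case. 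Everything else is bookkeeping with \Cref{thm-root-pres} and with the cofiniteness of the relevant sets.
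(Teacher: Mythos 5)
Your derivation of points (1) and (2) is correct and is essentially the paper's own argument: apply \Cref{thm-root-pres} with $L=K$ to the triples $(S,\mu|_{K'},f_1)$ and $(S,\mu^{-1}|_{K'},f_2)$, note that $Y_1$ and $Y_2$ invert each other on a cofinite (hence infinite) subset of $K$, deduce the polynomial identity $Y_1\circ Y_2=X$, and conclude that both have degree $1$. (The paper takes the infinite set to be $\mu(K')\cap\mu^{-1}(K')$ where you take $K'\cap\mu^{-1}(K')$; your choice is the cleaner one, and both are cofinite, so this is immaterial.)

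Point (3) is where your argument has a genuine gap: you assert that $f_1(1)$ and $f_2(1)$ are scalars, and nothing in the hypotheses gives you that. Taking $P=1$ only tells you that $f_1(1)$ has no root on $\mu(K')$ and $f_2(1)$ none on $\mu^{-1}(K')$; over a field that is not algebraically closed these can perfectly well be non-constant polynomials. Carrying out the composition honestly from point (2) gives $f_1(f_2(P)) = f_1(1)\cdot\bigl(f_2(1)\circ(aX+b)\bigr)\cdot P$ and $f_2(f_1(P)) = f_2(1)\cdot\bigl(f_1(1)\circ\tfrac{X-b}{a}\bigr)\cdot P$, and the substitution inside $f_2(1)$ (resp.\ $f_1(1)$) does not go away. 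Indeed, point (3) as stated actually fails without the scalar assumption: take $K=\Q$, $S=\emptyset$, $\mu(x)=x-1$, $f_1(P)=(X^2+1)\,P(X+1)$ and $f_2(P)=P(X-1)$; the three-way equivalence holds because $(x-1)^2+1\neq 0$ on $\Q$, yet $f_2(f_1(P))=((X-1)^2+1)\,P\neq (X^2+1)\,P=f_1(1)f_2(1)\,P$. To be fair, the paper's own proof dismisses (3) with ``all points follow'' and is subject to the same objection, and in every place the corollary is used only points (1) and (2) are invoked (with $f(1)$ shown to be constant afterwards by surjectivity); but as written, your justification of (3) rests on a fact that is not available, and you should either add the hypothesis that $f_1(1),f_2(1)$ are constant or weaken the conclusion accordingly.
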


\begin{proof}
  Note that the hypothesis implies that $f_1(1)$ does not vanish on $\mu(K')$ and that $f_2(1)$ does not vanish on $\mu^{-1}(K')$.
  Applying \Cref{thm-root-pres} (with $L=K$) to the triples $(S, \mu|_{K'}, f_1)$ and $(S, \mu^{-1}|_{K'}, f_2)$, we deduce that there are polynomials $Y_1, Y_2 \in K[X]$ such that, for all $P \in K[X]$:
  \[
    f_1(P) = f_1(1) \cdot (P \circ Y_1)
    \andd
    f_2(P) = f_2(1) \cdot (P \circ Y_2)    
  \]
  and such that both $Y_1 \circ \mu$ and $Y_2 \circ \mu^{-1}$ coincide with the identity on $K'$.
  In particular, the polynomial maps $Y_1$ and $Y_2$ coincide respectively with the maps $\mu^{-1}$ and $\mu$ outside the finite set $\mu(S) \cup \mu^{-1}(S)$.
  Since $\mu^{-1} \circ \mu = \id$, the algebraic equality $Y_1 \circ Y_2 = X$ holds in $K[X]$, and hence both $Y_1$ and $Y_2$ have degree~$1$.
  Writing $Y_1$ as $a X + b$, we have $Y_2 = \frac{X-b}{a}$ and $\mu(x) = Y_2(x)$ for $x \in K'$, and all points follow.
\end{proof}

\begin{remark}
  When $L|K$ is a nontrivial extension, the conclusion of \Cref{thm-root-pres} could not follow directly from an application of \Cref{prop-general} as the set $M_{\deg=1}(L[X])$ is empty when $L[X]$ is seen as a $K$-algebra.
\end{remark}

\section{Symmetries of the set of $k$-free polynomials}
\label{sn-kfree-kroot}

The goal of this section is to prove the following theorem:

\begin{theorem}
  \label{thm-kfree-kroot}
  Let $K$ be a field of characteristic zero and $k \geq 2$ be an integer.
  Assume that $f \colon K[X] \to K[X]$ is a $K$-linear bijection preserving the set of $k$-free polynomials, or preserving the set of polynomials with no $k$-fold root in $K$.
  Then, there are constants $a,c \in K^{\times}$, $b \in K$ such that:
  \[
    \forall P \in K[X], \quad
    f(P) = c \cdot P \circ (a X + b).
  \]
  In other words, $\Aut_{K \Vect}^{k \free} (K[X]) = \Aut_{K \Vect}^{k \rroot} (K[X]) = K^{\times} \rtimes \Aut_{K \alg}(K[X])$.
\end{theorem}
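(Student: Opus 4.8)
The plan is to treat both cases uniformly — writing $S$ for the invariant set, which is either the set of non-$k$-free polynomials or the set of polynomials with a $k$-fold root in $K$ — by first extracting a permutation $\nu$ of $K$ that records how $f$ moves roots, and then invoking \Cref{cor-mu-linear}. For $j \geq 1$ and $x \in K$ write $V_x^{(j)} \eqdef (X-x)^j K[X] = \{ P \in K[X] : (X-x)^j \mid P \}$, a $K$-linear subspace of codimension $j$. The first goal is to show that $f$ permutes the family $\{ V_x^{(k)} \}_{x \in K}$, i.e. that there is a bijection $\nu : K \to K$ with $f(V_x^{(k)}) = V_{\nu(x)}^{(k)}$ for all $x$ (possibly off a finite set, which \Cref{cor-mu-linear} is designed to tolerate). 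The point of working at the top order $k$ is that, unlike the hyperplanes $V_x^{(1)}$, the subspaces $V_x^{(k)}$ are genuinely contained in $S$, so they can be isolated intrinsically from $S$ alone.

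To isolate them, I would prove that the $V_x^{(k)}$ are exactly the members of minimal codimension among the maximal finite-codimension $K$-linear subspaces contained in $S$. The key lemma is that any finite-codimension subspace $W \subseteq S$ lies inside a single $V_x^{(k)}$ (for the $k$-free set, inside a single $q^k K[X]$ with $q$ irreducible). Indeed, for $P, Q \in W$ the whole pencil $P + tQ$ ($t \in K$) lies in $S$; since $k \geq 2$, if $q^k \mid P+tQ$ then $q$ divides the fixed polynomial $w \eqdef P Q' - P' Q$, so the repeated factor ranges over a finite set as $t$ varies, and pigeonholing over the infinitely many $t$ forces $P$ and $Q$ to share a common $k$-fold factor. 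A finite-union-of-subspaces argument over the infinite field $K$ then collapses $W$ into one $V_x^{(k)}$. This is precisely where $k \geq 2$ is used, explaining why $k=1$ must be handled separately. Since $f$ is a linear bijection it preserves codimension and inclusions, hence permutes the minimal members, producing $\nu$; applying the same reasoning to $f^{-1}$ shows $\nu$ is a bijection whose inverse is tracked by $f^{-1}$.

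The heart of the argument — and the step I expect to be the main obstacle — is the descent from order $k$ to order $1$: upgrading $f(V_x^{(k)}) = V_{\nu(x)}^{(k)}$ to $f(V_x^{(j)}) = V_{\nu(x)}^{(j)}$ for every $j \leq k$, and in particular to $f(V_x^{(1)}) = V_{\nu(x)}^{(1)}$, i.e. $P(x)=0 \iff f(P)(\nu(x))=0$. This is exactly the inductive step on $k$, with \Cref{cor-mu-linear} playing the role of the base case $j=1$. Conceptually, in the quotient $K[X]/V_x^{(k)} \cong K^k$ the nested flag $V_x^{(1)} \supset \cdots \supset V_x^{(k)}$ is the osculating flag at $x$ of the rational-normal-curve-type family $y \mapsto V_y^{(k)}$, and in characteristic zero one has the clean relation $V_x^{(k-1)} = D(V_x^{(k)})$ where $D = d/dX$; the flag is thus intrinsic to the family $\{V_y^{(k)}\}$ that $f$ preserves. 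The difficulty is that $\nu$ is a priori only a set-theoretic bijection and $f$ does not preserve degrees, so transferring this finer (differential) structure across $\nu$ — rather than merely the top subspaces — requires a real argument: for instance isolating $V_x^{(j-1)}$ from $V_x^{(j)}$ through the incidences of $V_x^{(j)}$ with the subspaces $V_y^{(k)}$ for auxiliary points $y \neq x$, or manufacturing from $f$ an honest linear bijection preserving the order-$(k-1)$ set, to which the inductive hypothesis then applies.

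Once simple-root tracking $P(x)=0 \iff f(P)(\nu(x))=0$ is in hand, I would conclude by invoking \Cref{cor-mu-linear} with $f_1 = f$ and $f_2 = f^{-1}$ (legitimate since $f^{-1}$ preserves $S$ and tracks roots via $\nu^{-1}$, and the finitely many exceptional points are absorbed into the finite set allowed there). This produces $a \in K^{\times}$, $b \in K$ with $\nu(x) = \tfrac{x-b}{a}$ and $f(P)(X) = f(1)\cdot P(aX+b)$; surjectivity of $f$ forces $f(1)$ to be a nonzero constant $c \in K^{\times}$, since any positive degree for $f(1)$ would confine every value of $f$ to positive degree. This gives $f(P) = c \cdot P \circ (aX+b)$, i.e. $f \in K^{\times} \rtimes \Aut_{K\alg}(K[X])$, as claimed.
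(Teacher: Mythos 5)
Your first step --- extracting the permutation $\nu$ by identifying the spaces $(X-x)^k K[X]$ as the minimal-codimension members of the maximal finite-codimension subspaces contained in the bad set --- is correct and is a legitimate variant of what the paper does in \Cref{lem-kfree-to-kroot}--\Cref{cor-dblroot-exist-mu} (the paper restricts attention to codimension exactly $k$ and uses the one-dimensional space of degree-$\leq k$ elements instead of your maximality/pigeonhole argument, but your Wronskian-based pencil lemma and the finite-union-of-subspaces collapse are sound). The final step --- feeding simple-root tracking into \Cref{cor-mu-linear} and reading off that $f(1)$ is a nonzero constant --- is also fine.

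The genuine gap is the step you yourself flag as the main obstacle: the descent from $f\bigl((X-x)^k K[X]\bigr) = (X-\nu(x))^k K[X]$ down to $P(x)=0 \iff f(P)(\nu(x))=0$. You list candidate strategies (intrinsic osculating flags, incidence with auxiliary $V_y^{(k)}$, or manufacturing an auxiliary map preserving $(k-1)$-fold roots) but carry none of them out, and the first two do not obviously work: the subspaces $V_x^{(j)}$ with $j<k$ are \emph{not} contained in the invariant set, any two distinct $V_x^{(k)}, V_y^{(k)}$ are in general position (their sum is all of $K[X]$), and the relation $V_x^{(k-1)} = \frac{d}{dX}V_x^{(k)}$ is useless without knowing that $f$ intertwines with differentiation. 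The third strategy is the one the paper actually implements, and it is where essentially all the work lies: \Cref{lem-roots-of-det} shows that $x$ is a $(k-1)$-fold root of the Wronskian $PQ'-P'Q$ iff some $\lambda P + \mu Q$ has a $k$-fold root at $x$, which lets one build $\tilde f(P) = f(1)\, f(\igl P)' - f(1)'\, f(\igl P)$ tracking $(k-1)$-fold roots via the same $\nu$. But note that this does \emph{not} yield simple-root tracking for $f$ itself, only for the iterated auxiliary maps; after the induction bottoms out one only knows the form of $\tilde f$, and recovering $f$ from the resulting first-order ODE $f(1) f(P)' - f(1)' f(P) = \tilde f(1)\, P'(aX+b)$ is a further substantial argument (the Euclidean division $\tilde f(1) = Q f(1)^2 + R$, the correction map $h_Q$, the degree contradiction forcing $R=0$, and the computation forcing $Q$ constant) occupying most of the proof of \Cref{thm-sub-main}. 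Your plan, which assumes the descent lands directly on root-tracking for $f$ so that \Cref{cor-mu-linear} finishes immediately, does not account for this recovery step at all.
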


The proof of \Cref{thm-kfree-kroot} is in two steps.
First, we establish in \Cref{subsn-kfree-to-kroot} that maps in $\Aut_{\R \Vect}^{k \rroot} (\R[X])$ or in $\Aut_{\R \Vect}^{k \free} (\R[X])$ preserve the set of polynomials with a given $k$-fold root ``up to a fixed permutation of $K$''.
In \Cref{subsn-kroot-induction}, we show that maps with that property are of the expected form, by induction on $k$, reducing to \Cref{cor-mu-linear} for the case $k=1$.
These pieces are put together in \Cref{subsn-proof-kfree-kroot} to prove \Cref{thm-kfree-kroot}.

\subsection{From non-$k$-free polynomials to $k$-fold roots}
\label{subsn-kfree-to-kroot}

We fix an integer $k \geq 2$ and an infinite field $K$ of characteristic zero or $> k$.
The main result of this subsection is \Cref{cor-dblroot-exist-mu}, which states that $K$-linear bijections preserving the set of $k$-free polynomials or the set of polynomials with no $k$-fold root in $K$ actually preserve the set of polynomials with a given $k$-fold root ``up to a fixed permutation of $K$''.
The starting point is the following lemma:

\begin{lemma}
  \label{lem-kfree-to-kroot}
  Let $x \in K$ and $P \in K[X]$.
  Let $K'$ be the set of elements $\lambda \in K$ such that $P + \lambda (X-x)^k$ is not $k$-free.
  Assume that the cardinality of $K'$ is at least $1 + \deg P$ (for example, if $K'$ is infinite).
  Then, the polynomial $(X-x)^k$ divides~$P$.
\end{lemma}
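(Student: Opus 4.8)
The plan is to prove the contrapositive: assuming $(X-x)^k \nmid P$, I will show that $K'$ has at most $\deg P$ elements, contradicting $\card{K'} \geq 1 + \deg P$. Write $Q_\lambda \eqdef P + \lambda(X-x)^k$, so that $K'$ is exactly the set of $\lambda$ for which $Q_\lambda$ is not $k$-free, i.e.\ for which there exists an irreducible $\pi_\lambda \in K[X]$ with $\pi_\lambda^k \mid Q_\lambda$ (choose one such $\pi_\lambda$ for each $\lambda \in K'$).

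The crucial observation is that a suitable fixed linear combination of $Q_\lambda$ and its $(k-1)$-th derivative does not depend on $\lambda$. Since $[(X-x)^k]^{(k-1)} = k!\,(X-x)$, the $\lambda$-dependent terms cancel exactly and one gets
\[
  k!\, Q_\lambda - (X-x)^{k-1} Q_\lambda^{(k-1)}
  =
  k!\, P - (X-x)^{k-1} P^{(k-1)}
  \eqdef W .
\]
For each $\lambda \in K'$ I would then argue that $\pi_\lambda \mid W$: indeed $\pi_\lambda \mid Q_\lambda$ trivially, and $\pi_\lambda \mid Q_\lambda^{(k-1)}$ because differentiating $\pi_\lambda^{k} R$ a total of $k-1$ times leaves at least one factor $\pi_\lambda$ in every term (a one-line induction on the product rule, $(\pi^k R)^{(j)} = \pi^{\,k-j}\cdot(\text{polynomial})$, valid in any characteristic). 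Hence $\pi_\lambda$ divides the fixed polynomial $W$ for every $\lambda \in K'$.

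Next I would check that the $\pi_\lambda$ are genuinely distinct irreducible factors of $W$. First, none is associate to $X - x$, since $(X-x)^k \mid Q_\lambda$ would give $(X-x)^k \mid P$, contrary to assumption. Second, distinct $\lambda, \mu \in K'$ give non-associate $\pi_\lambda, \pi_\mu$: if both were associate to a common $\pi$, then $\pi^k$ would divide $Q_\lambda - Q_\mu = (\lambda-\mu)(X-x)^k$, forcing $\pi \sim X-x$, again a contradiction. Provided $W \neq 0$, the distinct monic irreducibles $\pi_\lambda$ are thus pairwise coprime divisors of the nonzero polynomial $W$, so their product divides $W$; as each has degree $\geq 1$ and $\deg W \leq \deg P$ (both terms of $W$ have degree $\leq \deg P$), and as $\lambda \mapsto \pi_\lambda$ is injective, this yields $\card{K'} \leq \deg W \leq \deg P$, the desired contradiction.

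The only remaining — and I expect hardest — point is to verify that $W \neq 0$ under the assumption $(X-x)^k \nmid P$; equivalently, that $W = 0 \Rightarrow (X-x)^k \mid P$, and here the hypothesis $\operatorname{char} K > k$ is essential. I would analyze the $(X-x)$-adic valuation of the identity $k!\, P = (X-x)^{k-1} P^{(k-1)}$. Setting $r \eqdef v_{X-x}(P)$ (with $P \neq 0$, the case $P=0$ being trivial), the left-hand side has valuation exactly $r$ because $k! \neq 0$, while the right-hand side has valuation $\geq k-1$; equality forces either $r \geq k$ (the desired conclusion) or $r = k-1$ with $P^{(k-1)}(x) \neq 0$. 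To rule out $r = k-1$, I would compare coefficients of $(X-x)^{k-1}$: writing $P = (X-x)^{k-1}\tilde P$ with $\tilde P(x) \neq 0$, one computes $P^{(k-1)}(x) = (k-1)!\,\tilde P(x)$, so the identity reads $k!\,\tilde P(x) = (k-1)!\,\tilde P(x)$, i.e.\ $(k-1)\,(k-1)!\,\tilde P(x) = 0$, which is impossible since $k \geq 2$ and $\operatorname{char} K > k$. This forces $r \geq k$, establishing $W \neq 0$ and completing the argument.
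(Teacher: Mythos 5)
Your proof is correct, and it is built on the same underlying mechanism as the paper's: form a fixed linear combination of $Q_\lambda = P + \lambda(X-x)^k$ and one of its derivatives so that the $\lambda$-dependence cancels, observe that every irreducible witness of non-$k$-freeness must divide the resulting $\lambda$-independent polynomial, and treat separately the degenerate case where that polynomial vanishes, which reduces to a differential equation solvable only when $(X-x)^k \mid P$. The implementations differ in two ways. First, the paper eliminates $\lambda$ using the \emph{first} derivative, producing $\tilde P = P - \frac1k (X-x)P'$, while you use the $(k-1)$-th derivative, producing $W = k!\,P - (X-x)^{k-1}P^{(k-1)}$; correspondingly your degenerate case is the equation $k!\,P = (X-x)^{k-1}P^{(k-1)}$ rather than $kP = (X-x)P'$, and your valuation-plus-coefficient analysis of it is correct (the point $(k-1)(k-1)!\,\tilde P(x)=0$ is exactly where $\operatorname{char} K > k$ is used, mirroring the paper's use of $k-r \neq 0$). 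Second, the concluding step is different: the paper pigeonholes to find two values of $\lambda$ sharing an irreducible factor $\pi$ with $\pi^k$ dividing both $Q_{\lambda_1}$ and $Q_{\lambda_2}$, hence dividing $(\lambda_1-\lambda_2)(X-x)^k$, which yields $\pi = X-x$ and the conclusion constructively; you instead argue by contraposition, showing the $\pi_\lambda$ are pairwise non-associate and bounding their number by $\deg W \leq \deg P$. Both uses of the cardinality hypothesis are quantitatively equivalent, so neither approach is stronger; yours is arguably slightly cleaner in that the divisibility $\pi_\lambda \mid Q_\lambda^{(k-1)}$ needs only one surviving factor of $\pi_\lambda$, but it trades the paper's direct conclusion for an indirect one. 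No gaps.
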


\begin{proof}
  By hypothesis, the following polynomial has positive degree for all $\lambda \in K'$:
  \begin{align*}
    g(\lambda)
    & \coloneqq
    \gcd\leftl(
      P + \lambda (X-x)^k,
      P' + \lambda k (X-x)^{k-1},
      \dots,
      P^{(k-1)} + \lambda k! (X-x)
    \rightr)
    \\
    & =
    \gcd\leftl(
      P - \frac1k (X-x) P',
      P' + \lambda k (X-x)^{k-1},
      \dots,
      P^{(k-1)} + \lambda k! (X-x)
    \rightr)
  \end{align*}
  Denote by $\tilde{P}$ the polynomial $P - \frac1k (X-x) P'$, which is independent of $\lambda$.

  First assume that $\tilde{P} = 0$, i.e., that $P = \frac1k (X-x) P'$.
  Let $R$ be the monic greatest common divisor of $P$ and $(X-x)^k$.
  We have $R = (X-x)^r$ for some integer $r \leq k$.
  If $r = k$, then $(X-x)^k$ divides~$P$ and we are done, so we assume that $r < k$.
  In particular, $k-r \in \{1, \dots, k\}$ is nonzero in~$K$.
  Write $P = (X-x)^r Q$, where $Q$ is not divisible by $(X-x)$.
  The differential equation $k P = (X-x) P'$ rewrites as $(k-r)Q = (X-x)Q'$, which implies that $(X-x)$ divides $Q$, a contradiction.

  Assume now that $\tilde{P} \neq 0$, and let $\tilde{P}_1, \dots, \tilde{P}_r$ be the monic irreducible factors of~$\tilde{P}$, of which there are at most $\deg P$.
  For each $\lambda \in K'$, $g(\lambda)$ is not constant and divides~$\tilde{P}$, and thus is a multiple of~$\tilde{P}_{i(\lambda)}$ for some $i(\lambda) \in {1,\dots,r}$.
  Then, $(\tilde{P}_{i(\lambda)})^k$ divides $P+\lambda(X-x)^k$.
  Since $\card{K'} > \deg P \geq r$, there are two distinct $\lambda_1, \lambda_2 \in K'$ such that $i(\lambda_1)=i(\lambda_2)$; we denote this common value by $i_0$.
  Then, $(\tilde{P}_{i_0})^k$ divides $P+\lambda_1 (X-x)^k$ and $P+\lambda_2(X-x)^k$, and hence divides both $P$ and $(X-x)^k$.
  Since $(X-x)^k$ has no irreducible factor besides $X-x$, we have $\tilde{P}_{i_0} = X-x$ and finally $(X-x)^k$ divides~$P$.
\end{proof}

\begin{corollary}
  \label{cor-codimk-nonkfree}
  Let $V$ be a $K$-linear subspace of $K[X]$ of codimension $k$ containing only polynomials which are not $k$-free.
  Then, there is some $x \in K$ such that $V = (X-x)^k K[X]$.
\end{corollary}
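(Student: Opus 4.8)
The plan is to pin down the center $x$ by first producing a very-low-degree element of $V$, then to use \Cref{lem-kfree-to-kroot} to force every element of $V$ to be divisible by $(X-x)^k$, and finally to conclude by a codimension count. The target space $(X-x)^k K[X]$ is itself of codimension $k$ (the classes of $1, X-x, \dots, (X-x)^{k-1}$ form a basis of the quotient) and consists of non-$k$-free polynomials, so it satisfies the hypotheses; the content is that these are the only such subspaces.

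First I would locate a nonzero polynomial of small degree in $V$. The space $K[X]_{\leq k}$ of polynomials of degree at most $k$ has dimension $k+1$, and for any finite-codimension subspace $V$ the intersection with a given subspace $U$ satisfies $\dim U/(U \cap V) \leq \dim K[X]/V = k$, since $U/(U\cap V)$ embeds in $K[X]/V$. Applying this with $U = K[X]_{\leq k}$ gives $\dim(V \cap K[X]_{\leq k}) \geq (k+1) - k = 1$, so $V$ contains a nonzero polynomial $P_0$ of degree at most $k$. By hypothesis $P_0$ is not $k$-free, so there is an irreducible $\pi$ with $\pi^k \mid P_0$. The inequality $k \deg \pi \leq \deg P_0 \leq k$ then forces $\deg \pi = 1$ and $\deg P_0 = k$, whence $P_0 = c(X-x)^k$ for some $c \in K^{\times}$ and $x \in K$. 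As $V$ is a subspace, $(X-x)^k \in V$.

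Next I would show $V \subseteq (X-x)^k K[X]$. Fix any $P \in V$. Since $(X-x)^k \in V$ and $V$ is a $K$-subspace consisting only of non-$k$-free polynomials, the polynomial $P + \lambda (X-x)^k$ lies in $V$, hence is not $k$-free, for \emph{every} $\lambda \in K$. Thus the set $K'$ appearing in \Cref{lem-kfree-to-kroot} is all of $K$, which is infinite; the lemma then yields $(X-x)^k \mid P$. As $P \in V$ was arbitrary, $V \subseteq (X-x)^k K[X]$, and since both are subspaces of the same finite codimension $k$, the inclusion is an equality: the induced surjection $K[X]/V \twoheadrightarrow K[X]/(X-x)^k K[X]$ between two $k$-dimensional spaces is an isomorphism.

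The main obstacle is the first step, and it is the genuinely decisive one: everything depends on extracting the low-degree element $P_0$ and recognizing that a non-$k$-free polynomial of degree \emph{at most} $k$ must be a scalar multiple of $(X-x)^k$ with $x \in K$. The degree bound $k\deg\pi \le \deg P_0 \le k$ is exactly tight, which is precisely what simultaneously forces the repeated factor to be linear and its root to be rational over $K$; this is what singles out a single candidate center $x$. Once $x$ is identified, the remaining steps are a direct invocation of \Cref{lem-kfree-to-kroot} together with a routine dimension count.
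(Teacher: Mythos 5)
Your proof is correct and follows essentially the same route as the paper's: extract a nonzero element of degree $\leq k$ from $V$ by a dimension count, identify it as a scalar multiple of $(X-x)^k$, apply \Cref{lem-kfree-to-kroot} to get $V \subseteq (X-x)^k K[X]$, and conclude by comparing codimensions. The only cosmetic difference is that the paper notes $V \cap K[X]_{\leq k}$ has dimension exactly $1$ (since polynomials of degree $<k$ are $k$-free), while you deduce $P_0 = c(X-x)^k$ directly from the tight degree inequality $k\deg\pi \leq \deg P_0 \leq k$; these are the same observation.
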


\begin{proof}
  Since $V$ has codimension $k$, the space $V_k$ of polynomials of degree $\leq k$ in~$V$ has dimension at least $1$.
  Since polynomials of degree $< k$ are $k$-free, they are not in $V$ and thus~$V_k$ has dimension exactly $1$.
  We have $V_k = K \cdot P$ for a monic polynomial $P$ of degree~$k$ which is not $k$-free, and which is therefore of the form $(X-x)^k$ for some $x \in K$.
  For all $Q \in V$ and $\lambda \in K$, the polynomial $Q + \lambda (X-x)^k \in V$ is not $k$-free and thus~$(X-x)^k$ divides~$Q$ by \Cref{lem-kfree-to-kroot}, as~$K$ is infinite.
  This establishes $V \subseteq (X-x)^k K[X]$.
  Since both spaces have codimension~$k$, they are equal.
\end{proof}

\begin{corollary}
  \label{cor-dblroot-exist-mu}
  Let $f \colon K[X] \to K[X]$ be a linear bijection mapping polynomials with a $k$-fold root in $K$ to non-$k$-free polynomials.
  Then, there is a map $\mu_f \colon K \to K$ such that $x \in K$ is a $k$-fold root of~$P$ if and only if $\mu_f (x)$ is a $k$-fold root of $f(P)$.
  Moreover, if $f^{-1}$ also maps polynomials with a $k$-fold root in $K$ to non-$k$-free polynomials, then $\mu_f$ is bijective of inverse $\mu_{f^{-1}}$, and in particular~$f$ preserves the number of distinct $k$-fold roots that a given polynomial has in $K$.
\end{corollary}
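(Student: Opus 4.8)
The plan is to recognize the set of polynomials admitting $x$ as a $k$-fold root as the codimension-$k$ subspace $W_x \eqdef (X-x)^k K[X]$, and to transport this identification through $f$ using \Cref{cor-codimk-nonkfree}. First I would fix $x \in K$ and note that $f(W_x)$ is again a subspace of codimension $k$, since $f$ is a $K$-linear bijection. Every element of $W_x$ has a $k$-fold root at $x$, so by hypothesis its image under $f$ is not $k$-free; hence $f(W_x)$ is a codimension-$k$ subspace consisting only of non-$k$-free polynomials. By \Cref{cor-codimk-nonkfree}, there is a unique $y \in K$ with $f(W_x) = (X-y)^k K[X]$ (uniqueness is clear, as the degree-$\leq k$ part of $(X-y)^k K[X]$ is the line spanned by $(X-y)^k$). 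Setting $\mu_f(x) \eqdef y$ defines a map $\mu_f : K \to K$.

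The first assertion then follows at once: $x$ is a $k$-fold root of $P$ exactly when $P \in W_x$, and because $f$ is a bijection this is equivalent to $f(P) \in f(W_x) = (X-\mu_f(x))^k K[X]$, i.e. to $\mu_f(x)$ being a $k$-fold root of $f(P)$.

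For the second assertion, assume $f^{-1}$ likewise sends polynomials with a $k$-fold root in $K$ to non-$k$-free polynomials, and apply the above construction to $f^{-1}$, producing a map $\mu_{f^{-1}} : K \to K$ with the analogous property. To prove $\mu_{f^{-1}} \circ \mu_f = \id$, I would test on $(X-x)^k$: since $x$ is a $k$-fold root of $(X-x)^k$, the element $\mu_f(x)$ is a $k$-fold root of $f((X-x)^k)$, and applying the property of $\mu_{f^{-1}}$ shows that $\mu_{f^{-1}}(\mu_f(x))$ is a $k$-fold root of $f^{-1}(f((X-x)^k)) = (X-x)^k$. As $(X-x)^k$ has degree $k$ and $x$ as its only root, $x$ is its unique $k$-fold root in $K$, forcing $\mu_{f^{-1}}(\mu_f(x)) = x$; the symmetric argument gives $\mu_f \circ \mu_{f^{-1}} = \id$, so $\mu_f$ is bijective with inverse $\mu_{f^{-1}}$.

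Finally, the count of distinct $k$-fold roots is preserved because the biconditional of the first part shows that $\mu_f$ carries the set of distinct $k$-fold roots of $P$ in $K$ into the set of distinct $k$-fold roots of $f(P)$, and surjectivity of $\mu_f$ makes this a bijection of finite sets. I expect no serious difficulty here: the whole argument rests on \Cref{cor-codimk-nonkfree}, and the only point that genuinely needs care is the mutual-inverse relation between $\mu_f$ and $\mu_{f^{-1}}$, which is what forces $\mu_f$ to be a bijection and which relies on $(X-x)^k$ having $x$ as its sole $k$-fold root.
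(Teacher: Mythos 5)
Your proposal is correct and follows exactly the paper's argument: the paper likewise applies \Cref{cor-codimk-nonkfree} to the codimension-$k$ subspace $f((X-x)^k K[X])$ to define $\mu_f(x)$, and leaves the remaining verifications (which you spell out correctly) as ``the other points follow.''
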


\begin{proof}
  Let $x \in K$.
  The hypothesis implies that $f((X-x)^k K[X])$ is a subspace of $K[X]$ of codimension~$k$ consisting only of polynomials which are not $k$-free.
  By \Cref{cor-codimk-nonkfree}, there is a unique~$\mu_f (x) \in K$ such that $f((X-x)^k K[X]) = (X-\mu_f (x))^k K[X]$.
  The other points follow.
\end{proof}

\begin{remark}
  The hypothesis that the characteristic of $K$ is zero or greater than $k$ in \Cref{lem-kfree-to-kroot} is crucial.
  For instance, assume that $K$ is an infinite field of characteristic $p$ and that $k$ is a power of~$p$.
  Let $x \in K^{\times}$.
  Then $(X-x)^k$ does not divide $X^{2k}$ but, for any $\lambda \in K$ which is a $k$-th power, the polynomial $X^{2k} + \lambda (X-x)^k$ is not $k$-free as it equals $\leftl(X^2 + \sqrt[k]{\lambda} (X - x)\rightr)^k$.
\end{remark}

\subsection{From $k$-fold roots to $(k-1)$-fold roots}
\label{subsn-kroot-induction}

We fix a field $K$ of characteristic zero.
In this subsection, we prove \Cref{thm-sub-main}, which characterizes $K$-linear bijections $K[X] \to K[X]$ preserving $k$-fold roots up to a fixed permutation of $K$.
The proof is by induction on $k$, using the results of \Cref{sn-root-permuting} for the case $k=1$, and using the following lemma to relate the cases $k$ and $k-1$:

\begin{lemma}
  \label{lem-roots-of-det}
  Let $k \geq 2$ be an integer, let $P, Q \in K[X]$, and let $x \in K$ be such that at least one of~$P(x)$ and~$Q(x)$ is nonzero.
  Then, $x$ is a $(k-1)$-fold root of $P Q' - P' Q$ if and only if $x$ is a $k$-fold root of $\lambda P + \nu Q$ for some $(\lambda, \nu) \in K^2 \setminus \{(0,0)\}$.
\end{lemma}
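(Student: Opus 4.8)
The plan is to reduce everything to an order-of-vanishing computation in the ring of formal power series $K[[X]]$, where the identity relating $PQ'-P'Q$ to the derivative of $Q/P$ makes both conditions transparent.

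First I would make two reductions. Replacing $P$ and $Q$ by $P(X+x)$ and $Q(X+x)$ changes nothing essential: the translation $X \mapsto X+x$ induces a $K$-algebra automorphism $\phi$ of $K[X]$ that commutes with $\tfrac{d}{dX}$, hence carries $PQ'-P'Q$ to $\widetilde{P}\,\widetilde{Q}{}' - \widetilde{P}{}'\,\widetilde{Q}$ (writing $\widetilde{P}=\phi(P)$, etc.), carries $k$-fold and $(k-1)$-fold roots at $x$ to such roots at $0$, and sends the values $P(x),Q(x)$ to the constant terms of $\widetilde{P},\widetilde{Q}$. So I may assume $x=0$. Next, swapping $P$ and $Q$ negates $PQ'-P'Q$ (leaving its order of vanishing unchanged) and merely permutes the pencil $\{\lambda P+\mu Q\}$, so both sides of the claimed equivalence are symmetric in $P,Q$; since at least one of $P(0),Q(0)$ is nonzero, I may therefore assume $P(0)\neq 0$.

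The key device is then the identity $\bigl(Q/P\bigr)' = (PQ'-P'Q)/P^2$. Because $P(0)\neq 0$, the series $P$ is a unit in $K[[X]]$, so I can set $R \eqdef Q/P = \sum_{n\geq 0} r_n X^n \in K[[X]]$; then $W \eqdef PQ'-P'Q = P^2 R'$ with $P^2$ a unit. Writing $v_0$ for the order of vanishing at $0$, this gives $v_0(W)=v_0(R')$, and since $R'=\sum_{n\geq 1} n\,r_n X^{n-1}$, the condition $v_0(R')\geq k-1$ amounts to $n\,r_n=0$ for $1\leq n\leq k-1$, i.e. to $r_1=\dots=r_{k-1}=0$ (here I use $\operatorname{char}K=0$). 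Thus $0$ is a $(k-1)$-fold root of $W$ if and only if $R$ is constant modulo $X^k$. On the other side, $X^k\mid \lambda P+\mu Q$ for some $(\lambda,\mu)\neq(0,0)$: as $P(0)\neq 0$, the case $\mu=0$ would force $X^k\mid P$ and is excluded, so $\mu\neq 0$, and after rescaling the condition reads $Q\equiv -\lambda P\pmod{X^k}$, i.e. $R\equiv -\lambda\pmod{X^k}$ — again exactly the statement that $R$ is constant modulo $X^k$. Since both conditions translate to this same middle statement, the equivalence follows.

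I expect the only real obstacle to be the direction ``$(k-1)$-fold root of $W$ $\Rightarrow$ common $k$-fold root of a combination''. Phrased through the $2\times 2$ minors $p_iq_j-p_jq_i$ of the coefficient matrix of $(P,Q)$, the Wronskian condition forces the vanishing of only those minors with $i+j\leq k-1$, whereas a nonzero combination killed to order $k$ requires all minors with indices $\leq k-1$ to vanish; the power-series identity propagates the partial family of vanishings to the full one in a single step. The nonvanishing hypothesis on $P(0)$ or $Q(0)$ is essential precisely for this propagation — without it the equivalence fails, as shown by $P=X$, $Q=X^2$, $k=3$, where $W=X^2$ vanishes to order $2=k-1$ at $0$ yet no nonzero combination $\lambda X+\mu X^2$ is divisible by $X^3$.
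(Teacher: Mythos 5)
Your proof is correct, and it takes a genuinely different route from the paper. The paper differentiates the Wronskian $n$ times using a Leibniz-type formula for determinants and runs an induction on $n\in\{0,\dots,k-1\}$, propagating the vanishing of the $2\times 2$ minors $P^{(i)}(x)Q^{(j)}(x)-P^{(j)}(x)Q^{(i)}(x)$ one step at a time to show that the order-$(k-1)$ jets of $P$ and $Q$ at $x$ are colinear, with the explicit witness $(\lambda,\mu)=(-Q(x),P(x))$. You instead localize at $x$, invert $P$ in $K[[X]]$, and use the identity $(Q/P)'=(PQ'-P'Q)/P^2$ to convert \emph{both} conditions into the single statement that $Q/P$ is constant modulo $X^k$; the unit $P^2$ absorbs the Wronskian, and no induction is needed. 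Your argument is shorter and more conceptual, and it recovers the same witness up to scaling ($\mu=1$, $\lambda=-Q(0)/P(0)$). Two small points. First, your step from $n\,r_n=0$ to $r_n=0$ uses that $1,\dots,k-1$ are invertible, i.e.\ characteristic $0$ or characteristic $>k-1$; this is harmless here since the ambient subsection fixes characteristic zero (and both proofs anyway need the characteristic assumption to identify ``multiplicity $\geq k$'' with ``jet of order $k-1$ vanishes''). Second, it is worth noting explicitly that divisibility by $X^k$ in $K[[X]]$ and in $K[X]$ agree for polynomials, so the power-series computation does answer the polynomial question; you use this silently but it is immediate. Your closing example $P=X$, $Q=X^2$, $k=3$ correctly shows the nonvanishing hypothesis cannot be dropped.
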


\begin{proof}
  Using the equality
  $
    \leftl(
      \det\!
      \begin{pmatrix}
        A & B \\
        C & D
      \end{pmatrix}
    \rightr)'
    =
    \det\!
    \begin{pmatrix}
      A' & B' \\
      C & D
    \end{pmatrix}
    +
    \det\!
    \begin{pmatrix}
      A & B \\
      C' & D'
    \end{pmatrix}
  $
  repeatedly, we obtain:%
  \footnote{
    If we take cancellations of determinants into account, we can be more precise:
    \[
      \leftl(
        \det\!
        \begin{pmatrix}
          P & Q \\
          P' & Q'
        \end{pmatrix}
      \rightr)^{(n)}
      =
      \sum_{i=0}^{
        \leftl\lceil
          n / 2
        \rightr\rceil
      }
        C(n-i,i)
        \cdot
        \det\!
        \begin{pmatrix}
            P^{(i)} & Q^{(i)} \\
            P^{(n+1-i)} & Q^{(n+1-i)}
        \end{pmatrix}
    \]
    where~$C(n,p)$ denotes Catalan numbers.
  }
  \[
    \forall n \in \N, \quad
    (P Q' - P' Q)^{(n)}
    =
    \leftl(
      \det\!
      \begin{pmatrix}
        P & Q \\
        P' & Q'
      \end{pmatrix}
    \rightr)^{(n)}
    =
    \sum_{i=0}^n
      \binom{n}{i}
      \cdot
      \det\!
      \begin{pmatrix}
        P^{(i)} & Q^{(i)} \\
        P^{(n+1-i)} & Q^{(n+1-i)}
      \end{pmatrix}
      .
  \]
  Now, assume that there exists $(\lambda, \nu) \in K^2 \setminus \{(0,0)\}$ such that $x \in K$ is a $k$-fold root of $\lambda P + \nu Q$, i.e., that the vectors $\leftl( P(x), P'(x), \dots, P^{(k-1)}(x) \rightr)$ and $\leftl( Q(x), Q'(x), \dots, Q^{(k-1)}(x) \rightr)$ are colinear.
  Then, all the determinants
  $
    \det\!
    \begin{pmatrix}
      P^{(i)}(x) & Q^{(i)}(x) \\
      P^{(n+1-i)}(x) & Q^{(n+1-i)}(x)
    \end{pmatrix}
  $
  vanish for $0 \leq i \leq n \leq k-1$, so $x$ is a $(k-1)$-fold root of $P Q' - P' Q$.

  Conversely, assume that $x$ is a $(k-1)$-fold root of $P Q' - P' Q$.
  We show by induction on $n \in \{0, \dots, k-1\}$ that $\leftl( Q(x), Q'(x), \dots, Q^{(n)}(x) \rightr)$ and $\leftl( P(x), P'(x), \dots, P^{(n)}(x) \rightr)$ are colinear, and more precisely that:
  \[
    P(x)
    \cdot
    \leftl(
      Q(x), Q'(x), \dots, Q^{(n)}(x)
    \rightr)
    -
    Q(x)
    \cdot
    \leftl(
      P(x), P'(x), \dots, P^{(n)}(x)
    \rightr)
    = 0.
  \]
  Note that $(-Q(x), P(x)) \in K^2 \setminus \{(0,0)\}$ as $x$ is not a common root of $P$ and $Q$.
  The result for $n = 0$ is clear.
  Fix an $n \in \{1, \dots, k-1\}$, and assume that $P(x) \cdot (Q(x), Q'(x), \dots, Q^{(n-1)}(x)) -  Q(x) \cdot (P(x), P'(x), \dots, P^{(n-1)}(x)) = 0$.
  In particular, all determinants
  $
    \det\!
    \begin{pmatrix}
      P^{(i)}(x) & Q^{(i)}(x) \\
      P^{(j)}(x) & Q^{(j)}(x)
    \end{pmatrix}
  $ for $i, j \in \{0, \dots, n-1\}$ vanish.
  We have:
  \begin{align*}
    0
    =
    (P Q' - P' Q)^{(n-1)}(x)
    & =
    \sum_{i=0}^{n-1}
      \binom{n-1}{i}
      \cdot
      \det\!
      \begin{pmatrix}
        P^{(i)}(x) & Q^{(i)}(x) \\
        P^{(n-i)}(x) & Q^{(n-i)}(x)
      \end{pmatrix}
    \\
    & =
    \det\!
    \begin{pmatrix}
      P(x) & Q(x) \\
      P^{(n)}(x) & Q^{(n)}(x)
    \end{pmatrix}
  \end{align*}
  and hence $P(x) Q^{(n)}(x) - Q(x) P^{(n)}(x) = 0$.
  By induction, we have shown:
  \[
    P(x)
    \cdot
    \leftl(
      Q(x), Q'(x), \dots, Q^{(k-1)}(x)
    \rightr)
    -
    Q(x)
    \cdot
    \leftl(
      P(x), P'(x), \dots, P^{(k-1)}(x)
    \rightr)
    = 0.
  \]
  Hence, there is a pair $(\lambda, \nu) \in K^2 \setminus \{(0,0)\}$ such that $x$ is a $k$-fold root of $\lambda P + \nu Q$, namely $\lambda = -Q(x)$ and $\nu=P(x)$.
\end{proof}

We also need the following small lemma:

\begin{lemma}
  \label{lem:criterion-scalar-multiples}
  Let $P, Q \in K[X]$ be polynomials satisfying $P Q' = P' Q$, with $P \neq 0$.
  Then, there is a constant $\lambda \in K$ such that $Q = \lambda P$.
\end{lemma}

\begin{proof}
  If $Q = 0$, we can take $\lambda = 0$.
  Hence, we assume that $Q \neq 0$.
  Comparing the leading monomials of $PQ'$ and $P'Q$ shows that $P$ and $Q$ must have the same degree, so $Q = \lambda P + \tilde Q$ for some $\lambda \in K^\times$ and some $\tilde Q \in K[X]$ of degree $< \deg P$.
  The equality $PQ' = P'Q$ implies $P \tilde Q' = P' \tilde Q$.
  By the same arguments as those used for $Q$, this can only occur if $\deg \tilde Q = \deg P$, which is not true, or if $\tilde Q = 0$, which implies the claim.
\end{proof}

Let $\igl \colon K[X] \to K[X]$ be the linear map taking a polynomial to its formal antiderivative vanishing at~$0$, i.e., the linear map induced by $X^n \mapsto \frac{X^{n+1}}{n+1}$, which is well-defined as $K$ has characteristic zero.
For all polynomials $P \in K[X]$, we have $(\igl P)' = P$ and $\igl (P') = P - P(0)$.

\begin{theorem}
  \label{thm-sub-main}
  Let $k \geq 1$ be an integer.
  Let $S$ be a finite subset of $K$ and $K' \coloneqq K \setminus S$.
  Let $\mu \colon K \to K$ be a bijective map and let $f_1, f_2 \colon K[X] \to K[X]$ be two $K$-linear maps such that, for all $P \in K[X]$ and $x \in K'$:
  \begin{equation}
    \label{eqn-hyp}
    x \text{ is a } k \text{-th root of } P
    \iff
    \mu(x) \text{ is a } k \text{-th root of } f_1(P)
    \iff
    \mu^{-1}(x) \text{ is a } k \text{-th root of } f_2(P).
  \end{equation}
  Then, there are constants $a \in K^{\times}$ and $b \in K$ such that:
  \begin{enumerate}
    \item
      $\mu(x) = \frac{x-b}{a}$ for all but finitely many $x \in K$
    \item
      $
        f_1(P)(X)
        =
        f_1(1)
        P(a X + b)
      $ and $
        f_2(P)(X)
        =
        f_2(1)
        P\leftl(
          \frac{X-b}{a}
        \rightr)
      $, for all $P \in K[X]$
    \item
      $f_1 \circ f_2 = f_2 \circ f_1 = f_1(1) f_2(1) \cdot \id$.
  \end{enumerate}
\end{theorem}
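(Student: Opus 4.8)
The plan is to induct on $k$, with the case $k=1$ being exactly \Cref{cor-mu-linear}. So fix $k \geq 2$ and assume the statement for $k-1$. First note that taking $P=1$ in \Cref{eqn-hyp} shows that $\mu(x)$ is never a $k$-fold root of $f_1(1)$ (and $\mu^{-1}(x)$ never a $k$-fold root of $f_2(1)$) for $x \in K'$; since the zero polynomial vanishes to every order everywhere, this forces $f_1(1) \neq 0$ and $f_2(1) \neq 0$. Throughout I write $W(P,Q) \eqdef P Q' - P' Q$ for the bracket appearing in \Cref{lem-roots-of-det}.

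The heart of the inductive step is to manufacture, out of $f_1$ and $f_2$, two $K$-linear maps satisfying \Cref{eqn-hyp} at level $k-1$ for the \emph{same} $\mu$. I would set
\[ F_1(R) \eqdef W\!\left(f_1(\igl R),\, f_1(1)\right) \andd F_2(R) \eqdef W\!\left(f_2(\igl R),\, f_2(1)\right), \]
which are $K$-linear because $\igl$, $f_i$ and $W(\,\cdot\,, f_i(1))$ all are. To see $F_1$ inherits the right property, apply \Cref{lem-roots-of-det} twice. Applied to the pair $(\igl R, 1)$ — whose second entry never vanishes — it gives, for every $x$, that $x$ is a $(k-1)$-fold root of $W(\igl R, 1) = -R$ if and only if $x$ is a $k$-fold root of $\lambda \igl R + \nu$ for some $(\lambda,\nu) \neq (0,0)$. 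Feeding $\lambda \igl R + \nu$ into \Cref{eqn-hyp} (valid for $x \in K'$) turns this into a statement about $\mu(x)$ being a $k$-fold root of $\lambda f_1(\igl R) + \nu f_1(1)$; applying \Cref{lem-roots-of-det} backwards to the pair $(f_1(\igl R), f_1(1))$ rewrites it as ``$\mu(x)$ is a $(k-1)$-fold root of $F_1(R)$''. The backward application is legitimate exactly when $\mu(x)$ is not a common root of $f_1(\igl R)$ and $f_1(1)$, which can fail only when $\mu(x)$ is one of the finitely many roots of $f_1(1)$ — an exceptional set of $x$ that is finite and, crucially, independent of $R$. Thus $(F_1, F_2, \mu)$ satisfy \Cref{eqn-hyp} at level $k-1$ off a finite set, and the induction hypothesis yields $a \in K^{\times}$, $b \in K$ with $\mu(x) = \frac{x-b}{a}$ for all but finitely many $x$ (this is already conclusion~(1)), together with $F_1(R) = F_1(1)\,\phi(R)$ and $F_2(R) = F_2(1)\,\phi^{-1}(R)$, where $\phi(R)(X) \eqdef R(aX+b)$ and $F_1(1), F_2(1) \in K^{\times}$ by the same nonvanishing argument as above.

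It remains to recover $f_1$ (symmetrically $f_2$) from $F_1$, and this is the delicate part. Using $\igl(P') = P - P(0)$ and $W(A - cB, B) = W(A,B)$ one gets $F_1(P') = W(f_1(P), f_1(1))$, so $F_1 = F_1(1)\,\phi$ becomes
\[ f_1(P)\, f_1(1)' - f_1(P)'\, f_1(1) = F_1(1)\, \phi(P') \qquad \text{for all } P. \]
The key observation is a degree count: writing $d = \deg f_1(1)$ and taking $P = X^{2d}$, the right-hand side has degree $2d-1$, whereas for $d \geq 1$ the left-hand side $W(f_1(X^{2d}), f_1(1))$ has degree $\deg f_1(X^{2d}) + d - 1$ (never $2d-1$ unless $\deg f_1(X^{2d}) = d$) or, when $\deg f_1(X^{2d}) = d$, strictly less than $2d-1$ because the top-degree terms of $f_1(X^{2d})\, f_1(1)'$ and $f_1(X^{2d})'\, f_1(1)$ cancel. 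This contradiction forces $d = 0$, so $f_1(1)$ is a nonzero constant and the displayed relation collapses to $f_1(P)' = \kappa\,(\phi P)'$ for a nonzero constant $\kappa$. Integrating gives $f_1(P) = \kappa\,\phi(P) + t(P)$ with $t(P) \in K$ linear in $P$; and for any good $x$, a polynomial $P$ with a $k$-fold root at $x$ is sent by both $f_1$ and $\kappa\phi$ to a polynomial vanishing at $\mu(x)$ (as $\phi(P)(\mu(x)) = P(x) = 0$), so $t$ vanishes on every ideal $(X-x)^k K[X]$. Since two such ideals for distinct $x$ are comaximal they span $K[X]$, whence $t \equiv 0$ and $f_1(P) = f_1(1)\,P(aX+b)$; the same argument handles $f_2$, and conclusion~(3) follows by composing the two explicit formulas. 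The main obstacle is precisely this recovery step — especially the degree argument pinning $f_1(1)$ down to a constant, and arranging the Wronskian auxiliary maps so that their exceptional sets are independent of $R$.
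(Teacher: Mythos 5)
Your setup coincides with the paper's: induction on $k$ with \Cref{cor-mu-linear} as the base case, and descent from level $k$ to level $k-1$ via the Wronskian maps $F_i(R) = W(f_i(\igl R), f_i(1))$ (these are the paper's $\tilde{f_i}$ up to sign), with the exceptional set enlarged by $\mu^{\pm1}$ of the roots of $f_i(1)$. That part is correct. The gap is in the recovery step, at exactly the point you flag as delicate: you assert $F_1(1) \in K^{\times}$ ``by the same nonvanishing argument as above,'' but that argument only shows that the polynomial $F_1(1) = W(f_1(X), f_1(1))$ is nonzero, not that it is constant --- and it is not constant in general. Note that conclusion (2) of the theorem does not force $f_1(1)$ to be a constant: the map $f_1(P) = C \cdot P$ for a fixed nonconstant $C \in K[X]$, with $\mu = \id$ and $S \supseteq \{x : C(x)=0\}$, satisfies \Cref{eqn-hyp} and has $f_1(1) = C$ nonconstant and $F_1(1) = -C^2$ nonconstant. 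Your degree count (``the right-hand side has degree $2d-1$'' for $P = X^{2d}$) silently uses $\deg F_1(1) = 0$; with $\deg F_1(1) = 2d$ the two sides of your displayed relation have matching degrees and no contradiction arises. Consequently the conclusion you extract ($\deg f_1(1) = 0$) is false in general, and everything downstream (integration, the linear form $t$, comaximality of the ideals $(X-x)^k K[X]$) rests on it.

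The paper has to work considerably harder precisely here: it performs the Euclidean division $\tilde{f_1}(1) = Q \cdot f_1(1)^2 + R$ with $\deg R < 2\deg f_1(1)$, constructs an explicit auxiliary map $h_Q$ whose Wronskian against $f_1(1)$ is $f_1(1)^2 Q P'$, and runs the degree argument only on the remainder equation $f_1(1)g_1(P)' - f_1(1)'g_1(P) = RP'$ to conclude $R=0$. Even then it is not finished: the vanishing Wronskian only yields $g_1(P) = u(P)\, f_1(1)$ for a scalar-valued linear form $u$, and a further computation evaluating $f_1\!\left((X-x)^{\ell}\right)$ at $x$ for all $\ell \geq k$ is needed to show $u \equiv 0$ and that $Q$ is the constant $1$. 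You need a replacement for this entire block; the bare degree count cannot supply it.
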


\begin{proof}
  We reason by induction on~$k$.
  The case $k=1$ is \Cref{cor-mu-linear}.
  Let $k \geq 2$, and assume that the theorem holds for~$k-1$.

  Let $P \in K[X]$.
  Note that
  $
    P
    =
    (\igl P)'
    =
    \det\!
    \begin{pmatrix}
      1 & \igl P \\
      1' & (\igl P)'
    \end{pmatrix}
  $.

  Let $x \in K'$, and assume that~$x$ does not belong to the finite set
  $
    S'_2
    \coloneqq
    \leftl\{
      \mu(y)
      \verti
      y \text{ root of } f_2(1)
    \rightr\}
  $.
  Then:
  \begin{align*}
    &
    \Bigl[
      x \text{ is a $(k-1)$-fold root of } P
    \Bigr]
    \\
    \Leftrightarrow
    &
    \leftl[
      \exists (\lambda, \nu) \in K^2 \setminus \{(0,0)\},
      \,\,
      x \text{ is a $k$-fold root of } \lambda \cdot 1 + \nu \cdot \igl P
    \rightr]
    &
    \text{by \Cref{lem-roots-of-det}}
    \\
    \Leftrightarrow
    &
    \leftl[
      \exists (\lambda, \nu) \in K^2 \setminus \{(0,0)\},
      \,\,
      \mu^{-1}(x) \text{ is a $k$-fold root of } \lambda \cdot f_2(1) + \nu \cdot f_2(\igl P)
    \rightr]
    &
    \text{by}
    \begin{cases}
      x \in K' \\
      \text{\Cref{eqn-hyp}}
    \end{cases}
    \\
    \Leftrightarrow
    &
    \Bigl[
      \mu^{-1}(x) \text{ is a $(k-1)$-fold root of } f_2(1) f_2(\igl P)' - f_2(1)' f_2(\igl P)
    \Bigr]
    &
    \text{by}
    \begin{cases}
      x \not\in S'_2 \\
      \text{\Cref{lem-roots-of-det}}
    \end{cases}
  \end{align*}
  Define the $K$-linear map $\tilde{f_2} \colon P \mapsto f_2(1) f_2(\igl P)' - f_2(1)' f_2(\igl P)$.
  We have shown:
  \[
    \forall x \in K' \setminus S'_2, \quad
    [
      x \text{ is a $(k-1)$-fold root of } P
    ]
    \iff
    [
      \mu^{-1}(x) \text{ is a $(k-1)$-fold root of } \tilde{f_2}(P)
    ]
  \]
  Symmetrically, the $K$-linear map $\tilde{f_1} \colon P \mapsto f_1(1) f_1(\igl P)' - f_1(1)' f_1(\igl P)$ and the finite set $S'_1 \coloneqq \leftl\{ \mu(y) \verti y \text{ root of } f_1(1) \rightr\}$ satisfy:
  \[
    \forall x \in K' \setminus S'_1, \quad
    [
      x \text{ is a $(k-1)$-fold root of } P
    ]
    \iff
    [
      \mu(x) \text{ is a $(k-1)$-fold root of } \tilde{f_1}(P)
    ]
  \]
  Define the finite set $S' = S \cup S'_1 \cup S'_2$.
  By the induction hypothesis, applied to $(S', \mu, \tilde{f_1}, \tilde{f_2})$, there are constants $a \in K^{\times}$ and $b \in K$ such that $\mu(x) = \frac{x-b}{a}$ for all but finitely many~$x \in K$ and such that, for all $P \in K[X]$:
  \[
    \tilde{f_1}(P)(X)
    =
    \tilde{f_1}(1)
    P(a X + b)
    \andd
    \tilde{f_2}(P)(X)
    =
    \tilde{f_2}(1)
    P\leftl(
      \frac{X-b}{a}
    \rightr).
  \]
  The first equation implies:
  \begin{equation}
    \label{eqn-tilde-f-new}
    f_1(1) f_1(P)' - f_1(1)' f_1(P) = \tilde{f_1}(1) P'(a X + b)
  \end{equation}
  To simplify notations, we replace $f_1$ by the map
  $
    P
    \mapsto
    f_1\leftl(
      P \circ \frac{X-b}{a}
    \rightr)
  $, so that $a=1$ and $b=0$.
  Our goal then becomes to prove that $f_1(P) = f_1(1) P$ for all polynomials $P \in K[X]$ (the case of $f_2$ is symmetrical, and the third point follows from the second point).

  Using Euclidean division, write $\tilde{f_1}(1) = Q \cdot f_1(1)^2 + R$ for polynomials $Q, R \in K[X]$ satisfying $\deg R < 2 \deg f_1(1)$, and define the $K$-linear map $h_Q \colon K[X] \to K[X]$ by:
  \[
    h_Q (P) 
    \coloneqq
    f_1(1)
    \cdot
    [Q P - Q' (\igl P) + Q'' (\igl^2 P) - \dots]
  \]
  which is a finite sum as $Q$ is a polynomial.
  Note that:
  \begin{align*}
    f_1(1) h_Q (P)' - f_1(1)' h_Q (P)
    = 
    & f_1(1)^2
    \cdot
    [
        Q P'
        + Q' P
        - Q' P
        - Q'' (\igl P)
        + Q'' (\igl P)
        + Q''' (\igl^2 P)
        - \dots
    ]
    \\
     &
    + f_1(1) f_1(1)' 
    \cdot[Q P - Q' (\igl P) + Q'' (\igl^2 P) - \dots]
    \\
    &
    - f_1(1)' f_1(1) 
    \cdot[Q P - Q' (\igl P) + Q'' (\igl^2 P) - \dots]
    \\
    =&
    f_1(1)^2 Q P'.
  \end{align*}
  Now define $g_1 \coloneqq f_1 - h_Q$.
  For all polynomials $P \in K[X]$, we have:
  \begin{equation}
    \label{eqn-big-G}
    \begin{gathered}
      f_1(1) g_1(P)' - f_1(1)' g_1(P) = R P'
      \\
      \deg R < 2 \deg f_1(1).
    \end{gathered}
  \end{equation}

  We prove by contradiction that $R=0$.
  Assume that~$R$ is nonzero.
  Then, $2 \deg f_1(1) > \deg R$ implies that $f_1(1)$ is non-constant.
  Let $P$ be any polynomial of degree $2 \deg f_1(1) - \deg R$ (which is a positive integer).
  Using \Cref{eqn-big-G}, we get:
  \[
    \deg\Bigl(
      f_1(1) g_1(P)' - f_1(1)' g_1(P)
    \Bigr)
    = \deg(R P')
    = 2 \deg f_1(1) - 1.
  \]
  We can also compute the degree directly:
  \[
    \deg\Bigl(
      f_1(1) g_1(P)' - f_1(1)' g_1(P)
    \Bigr)
    \leq
    \deg f_1(1) + \deg g_1(P) - 1
  \]
  with equality if and only if $\deg f_1(1) \neq \deg g_1(P)$.
  But this means that $\deg f_1(1)$ and $\deg g_1(P)$ are equal if and only if they are not equal, which is absurd.
  We have shown that $R=0$.

  For all $P \in K[X]$, \Cref{eqn-big-G} then becomes
  $
    f_1(1) g_1(P)' = f_1(1)' g_1(P)
  $, so by \Cref{lem:criterion-scalar-multiples} there is a linear form $u \colon K[X] \to K$, mapping a polynomial~$P$ to the constant $u(P) = \frac{g_1(P)}{f_1(1)}$. 
  Unfolding the definitions of $g_1$ and $h_Q$, the equality $g_1(P) = u(P) f_1(1)$ rewrites as:
  \begin{equation}
    \label{eqn-g1-linform}
    f_1(P) = f_1(1) 
    \cdot[Q P - Q' (\igl P) + Q'' (\igl^2 P) - \dots + u(P)].
  \end{equation}

  Let $\tilde{S}$ be the finite set obtained by adding the roots of $f_1(1)$ to $S$.
  Let $x$ be an element of $K \setminus \tilde{S}$, and let $\ell \geq k$.
  Since $x \not\in S$, $x$ is a ($k$-fold) root of the polynomial
  \[
    f_1\leftl(
      (X-x)^\ell
    \rightr)
    =
    f_1(1)
    \cdot
    \leftl[
      \sum_{i \geq 0}
        \ell!
        Q^{(i)}
        \frac{
          (X-x)^{\ell+i}
          -
          \sum_{j=0}^{i-1}
            \binom{\ell+i}{j}
            X^j (-x)^{\ell+i-j}
        }{
          (\ell+i)!
        }
      +
      \sum_{i=0}^\ell
        \binom{\ell}{i}
        (-x)^{\ell-i}
        u(X^i)
    \rightr].
  \]
  Evaluating at $x$ and dividing by $f_1(1)(x)$ (which we may do as $x \not\in \tilde{S}$), we obtain:
  \begin{align*}
    0
    & =
    \sum_{i \geq 0}
      -
      \ell!
      Q^{(i)}(x)
      \sum_{j=0}^{i-1}
        \binom{\ell+i}{j}
        \frac{
          (-1)^j
          (-x)^{\ell+i}
        }{
          (\ell+i)!
        }
    +
    \sum_{i=0}^\ell
      \binom{\ell}{i}
      (-x)^{\ell-i}
      u(X^i)
    \\
    & =
    -\ell!
    \sum_{i \geq 1}
      Q^{(i)}(x)
      \sum_{j=0}^{i-1}
        \frac
          {(-1)^j}
          {j!(\ell+i-j)!}
        (-x)^{\ell+i}
    +
    \sum_{i=0}^\ell
      \binom{\ell}{i}
      (-x)^{\ell-i}
      u(X^i).
  \end{align*}
  This equality holds for all $x \in K \setminus \tilde{S}$.
  Since $K \setminus \tilde{S}$ is infinite, the corresponding polynomial in $x$ equals the zero polynomial.
  In particular, $u(1) = u(X) = \dots = u(X^{\ell-1}) = u(X^\ell) = 0$ (coefficients in front of $x^\ell, x^{\ell-1}, \dots, x^0$).
  Since this holds for all $\ell \geq k$, the linear form $u$ is identically zero.
  Thus, we have the following equality of polynomials for all $\ell \geq k$:
  \begin{equation}
    \label{eqn:q-zero}
    0 
    =
    \sum_{i \geq 1}
      Q^{(i)}
      \sum_{j=0}^{i-1}
        \frac{
          (-1)^j
        }{
          j!(\ell+i-j)!
        }
        (-X)^{\ell+i}
        .
  \end{equation}
  For any fixed $i \geq 1$, the formula $\binom{\ell+i}{j} = \binom{\ell+i-1}{j} + \binom{\ell+i-1}{j-1}$ lets us simplify the inner (telescoping) sum
  \[
    \sum_{j=0}^{i-1}
      \frac{(-1)^j}{j!(\ell + i - j)!}
    =
    \frac{1}{(\ell+i)!}
    \sum_{j=0}^{i-1}
      (-1)^j \binom{\ell+i}j
    =
    \frac{1}{(\ell + i)!}
    (-1)^i \binom{\ell+i-1}{i-1}
    =
    \frac{(-1)^i}{\ell! (\ell + i)(i-1)!}
  \]
  so the right-hand side of \Cref{eqn:q-zero} rewrites as
  \[
    \sum_{i \geq 1}
      Q^{(i)}
      \sum_{j=0}^{i-1}
        \frac{
          (-1)^j
        }{
          j!(\ell+i-j)!
        }
        (-X)^{\ell+i}
    =
    \sum_{i \geq 1}
      \frac{Q^{(i)} (-1)^i}{\ell! (\ell + i)(i-1)!}
    (-X)^{\ell + i}
    =
    \frac{(-X)^\ell}{\ell!}
    \sum_{i \geq 1}
      \frac{Q^{(i)} X^i}{(\ell + i)(i-1)!}.
  \]

  Assume that~$Q$ is non-constant, and let $a_d X^d$ be the leading monomial of $Q$ (with $d \geq 1$, $a_d \neq 0$).
  The coefficient in front of~$X^{\ell + d}$ in the right-hand side of \Cref{eqn:q-zero} is then
  \[
    \frac{(-1)^\ell}{\ell!}
    \sum_{1 \leq i \leq d}
      \frac{d!}{(d-i)! (\ell+i) (i-1)!} \cdot a_d
  \]
  which is nonzero ($a_d$ is nonzero, and the rational number by which it is multiplied is nonzero, as a nonempty sum of nonzero rational numbers of same sign), contradicting \Cref{eqn:q-zero}.
  
  
  We have shown by contradiction that $Q$ is constant.
  Letting $P=1$ in \Cref{eqn-g1-linform} shows that $Q=1$.
  Finally, \Cref{eqn-g1-linform} rewrites as $f_1(P) = f_1(1) P$.
\end{proof}

\subsection{Proof of \Cref{thm-kfree-kroot}}
\label{subsn-proof-kfree-kroot}

Let $f \in \Aut_{K \Vect}^{k \free} (K[X])$ or $f \in \Aut_{K \Vect}^{k \rroot} (K[X])$.
By \Cref{cor-dblroot-exist-mu}, there is a bijective map $\mu \colon K \to K$ such that
$
  f\leftl(
    (X-x)^k K[X]
  \rightr)
  =
  \bigl(
    X-\mu(x)
  \bigr)^k K[X]
$ for all $x \in K$.
By \Cref{thm-sub-main}, applied to $S=\emptyset$, $f_1=f$ and $f_2=f^{-1}$, there are constants $a \in K^{\times}$ and $b \in K$ such that $f(P) = f(1) \cdot P \circ (a X + b)$ for all $P \in K[X]$.
Since $f$ is surjective, $f(1)$ is a nonzero constant $c \in K^{\times}$.
This concludes the proof of \Cref{thm-kfree-kroot}.

\section{Symmetries of the set of polynomials with a root in a number field}
\label{sn-root-nf}

Fix a number field $K$ (or any Hilbertian field, such as $\Q^{\text{ab}}$ or any function field in at least one variable over a field) and a finite (separable) extension $L|K$.
In this section, we prove the following theorem:

\begin{theorem}
  \label{thm-root-nf}
  Let $f \colon L[X] \to L[X]$ be a $K$-linear bijection preserving the set of polynomials with a root in $L$.
  Then, there are constants $a, c \in L^{\times}$, $b \in L$, and an automorphism $\sigma \in \Aut(L|K)$ such that
  \[
    \forall P \in L[X], \quad
    f(P) = c \cdot  \sigma(P) \circ (a X + b).
  \]
  In other words, $\Aut_{K \Vect}^{\textnormal{root}} (L[X]) = L^{\times} \rtimes \Aut_{K \alg} (L[X])$.
\end{theorem}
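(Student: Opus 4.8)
The plan is to reduce the statement to \Cref{thm-root-pres} by first showing that $f$ permutes, up to a single bijection $\mu$ of $L$, the ``evaluation hyperplanes'' $H_x \eqdef (X-x) L[X] = \ker(\text{ev}_x)$ for $x \in L$, and then to upgrade the resulting conclusion using bijectivity. Note first that $H_x$ is an $L$-subspace of $K$-codimension $[L:K]$, and every $P \in H_x$ has the root $x \in L$, so $H_x \subseteq S$, where $S$ is the set of polynomials having a root in $L$. Since $f(S) = S$ and $f$ is a bijection, $f(H_x)$ is a $K$-subspace of $K$-codimension $[L:K]$ that is again contained in $S$. (The inclusion $L^\times \rtimes \Aut_{K\alg}(L[X]) \subseteq \Aut_{K \Vect}^{\textnormal{root}}(L[X])$ is immediate, so only the reverse inclusion needs an argument.)

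The key step is the following lemma, which is where Hilbert's irreducibility theorem enters: \emph{every $K$-subspace $W \subseteq S$ of finite $K$-codimension has a common root in $L$}, i.e. $W \subseteq H_y$ for some $y \in L$. I would argue by contradiction. Since $W$ is co-finite-dimensional it is Zariski-dense in $L[X]$, so I can choose $A, B \in W$ with $\gcd(A,B) = 1$ such that the rational map $\rho \eqdef -A/B \in L(X)$ has degree $\geq 2$ and the cover $\mathbb{P}^1 \xrightarrow{\rho} \mathbb{P}^1$ is indecomposable, so that $A + TB$ is irreducible over the relevant parameter field. For every $t \in K$ one has $A + tB \in W \subseteq S$, so $A + tB$ has a root in $L$, i.e. $t \in \rho(\mathbb{P}^1(L))$. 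To contradict this I encode the condition ``$A + tB$ has a root in $L$'' as the existence of a $K$-point on the fibre over $t$ of a cover of the $t$-line defined over $K$, obtained by applying restriction of scalars $R_{L/K}$ to $\rho$ and intersecting with the diagonal copy of $K$ inside $L$. As $K$ is Hilbertian and this $K$-cover is irreducible of degree $\geq 2$, Hilbert's irreducibility theorem forces the set of admissible $t \in K$ to be thin, contradicting the fact that it is all of $K$. Hence $W$ has a common root.

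Applying this lemma to $W = f(H_x)$ produces a point $\mu(x) \in L$ with $f(H_x) \subseteq H_{\mu(x)}$, and comparing $K$-codimensions upgrades this to the equality $f(H_x) = H_{\mu(x)}$. Bijectivity of $f$ makes $\mu$ injective, and running the same argument for $f^{-1}$ yields $\mu^{-1}$, so $\mu$ is a bijection of $L$ satisfying $P(x) = 0 \iff f(P)(\mu(x)) = 0$. Since $f$ is injective and $f(0) = 0$, we have $f(1) \neq 0$; after removing from $L$ the finitely many points $x$ with $f(1)(\mu(x)) = 0$, all hypotheses of \Cref{thm-root-pres} are met, and it provides $\sigma \in \Aut(L|K)$ and $Y \in L[X]$ with $f(P) = f(1)\cdot(\sigma(P) \circ Y)$ for all $P \in L[X]$.

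It remains to pin down $Y$ and $f(1)$, which I do by a degree count using surjectivity. Because $\sigma$ is a bijection of $L$, the image is $f(L[X]) = f(1)\cdot \{\, g \circ Y : g \in L[X] \,\}$, whose set of attained degrees is $\deg f(1) + (\deg Y)\cdot\Zpos$; surjectivity onto $L[X]$, which contains polynomials of every degree, forces $\deg Y = 1$ and $\deg f(1) = 0$. Writing $Y = aX + b$ with $a \in L^\times$, $b \in L$ and $f(1) = c \in L^\times$ yields $f(P) = c\cdot\sigma(P)\circ(aX+b)$, that is $f \in L^\times \rtimes \Aut_{K\alg}(L[X])$, as desired. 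The \textbf{main obstacle} is the lemma of the second paragraph: both the choice of $A, B \in W$ making $\rho$ an indecomposable cover of degree $\geq 2$, and the descent that turns ``having a root in $L$'' into an irreducible $K$-rational cover to which Hilbert irreducibility over $K$ applies, require care; the genericity afforded by the finite codimension of $W$ is what makes these choices possible.
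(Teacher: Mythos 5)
Your overall architecture is the same as the paper's: track the hyperplanes $(X-x)L[X]$ through $f$ using the fact that they are exactly the $K$-subspaces of codimension $[L:K]$ contained in $S$, obtain a bijection $\mu$ of $L$, feed this into \Cref{thm-root-pres}, and finish by forcing $\deg Y = 1$ and $f(1)$ constant (your surjectivity-based degree count for this last step is a clean alternative to the paper's argument via the identity $\sigma(Y_2)\circ Y_1 = X$). The genuine gap is in your key lemma. Arguing by contradiction, you need a coprime pair $A,B \in W$ with $\max(\deg A,\deg B)\geq 2$, and you attribute the existence of such a pair to ``the genericity afforded by the finite codimension of $W$''. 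Genericity cannot do this: $(X-y)L[X]$ is itself a codimension-$[L:K]$ subspace contained in $S$ in which \emph{no} two nonzero elements are coprime, so no amount of Zariski-density of $W$ produces a coprime pair. The hypothesis that $W$ has no common root must be used, and converting it into a coprime pair is precisely the content of the paper's \Cref{cor-codim1-root-nf}: one first shows (this is the contrapositive of the Hilbert-irreducibility lemma) that every degree-$1$ element of $W$ divides every element of $W$ of degree $\geq 2$, then uses the dimension count to see that the degree-$\leq 1$ part of $W$ has dimension exactly $[L:K]$, and finally pins down a single linear factor using a degree-$2$ element of $W$ and the fact that a vector space over an infinite field is not the union of two proper subspaces. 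Your sketch skips all of this, and it is the heart of the proof.

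Two smaller points. First, the indecomposability of $\rho=-A/B$ is unnecessary (for coprime $A,B$ not both of degree $\leq 0$ the polynomial $A+TB$ is automatically irreducible over $L(T)$, and all that is needed is that the image of a degree-$\geq 2$ morphism $\mathbb{P}^1_L\to\mathbb{P}^1_L$ on $L$-points is thin); insisting on it only adds a constraint you would then have to verify is satisfiable inside $W$. Second, your Weil-restriction descent from $L$ to $K$ is plausible but left unproved (one must check that the resulting cover of the $t$-line over $K$ is irreducible of degree $\geq 2$); the paper instead quotes \cite[Corollary~12.2.3]{fieldar}, which directly says that a Hilbertian subset of $\mathbb{P}^1(L)$ contains a Hilbertian subset of $\mathbb{P}^1(K)$ for $L|K$ finite separable. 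Finally, your lemma is stated for arbitrary finite codimension, which is more than you use and is not obviously true at that level of generality (once the codimension exceeds $[L:K]$, all elements of $W$ could a priori share an irreducible factor with no root in $L$, and the degree-$\leq 1$ part of $W$ could vanish); only the case of codimension exactly $[L:K]$ is needed, and you should state it that way.
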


First, we prove the following lemma using Hilbert's irreducibility theorem:

\begin{lemma}
  \label{lem-hit}
  Let $P, Q \in L[X]$ be coprime polynomials, and assume that for all $\lambda \in K$, the polynomial~$P + \lambda Q$ has a root in $L$.
  Then, both $P$ and $Q$ have degree at most $1$.
\end{lemma}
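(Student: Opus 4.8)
The plan is to rephrase the hypothesis in terms of a pencil of polynomials and apply Hilbert's irreducibility theorem. Set $d \eqdef \max(\deg P, \deg Q)$; since the conclusion ``$\deg P \leq 1$ and $\deg Q \leq 1$'' is equivalent to $d \leq 1$, it suffices to prove $d \leq 1$, and I argue by contradiction, assuming $d \geq 2$. The key object is the two-variable polynomial $F(T,X) \eqdef P(X) + T \, Q(X) \in L[T,X]$, which is linear in $T$.

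First I would verify that $F$ is irreducible over $L(T)$ of degree $d$ in $X$. Viewing $F = Q(X)\, T + P(X)$ as an element of $L[X][T]$, any nontrivial factorization forces a factor lying in $L[X]$ and dividing both $P$ and $Q$, hence a unit since $\gcd(P,Q) = 1$; by Gauss's lemma $F$ is irreducible in $L[T,X]$, and being primitive as a polynomial in $X$ it is irreducible over $L(T)$. Its $X$-degree is clearly $d$. In characteristic $0$ (in particular for $K$ a number field) $F$ is automatically separable in $X$ over $L(T)$, which is all that HIT requires.

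The heart of the argument is to specialize $T$ to a value $\lambda \in K$ while preserving irreducibility over $L$. By the relevant form of Hilbert's irreducibility theorem, since $K$ is Hilbertian and $L|K$ is finite separable, there is an infinite Hilbert set of $\lambda \in K$ for which $F(\lambda, X)$ is irreducible over $L$. Discarding the at most one $\lambda$ making the leading coefficient $(\operatorname{lc} P) + T\,(\operatorname{lc} Q)$ of $F$ in $X$ vanish (and $\lambda = 0$ if $\deg Q > \deg P$) — finitely many points, which cannot exhaust an infinite Hilbert set — I retain $\deg_X F(\lambda, X) = d \geq 2$. For such $\lambda$, the polynomial $P + \lambda Q = F(\lambda, X)$ is irreducible of degree $\geq 2$ over $L$, hence has no root in $L$, contradicting the hypothesis that $P + \lambda Q$ has a root in $L$ for every $\lambda \in K$. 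Therefore $d \leq 1$.

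The one delicate point — and the main obstacle — is the \emph{relative} Hilbert irreducibility invoked above: the parameter $\lambda$ ranges over the smaller field $K$, whereas irreducibility is demanded over the larger field $L$. To reduce this to classical HIT over $K$, I would pass to the tower $K(T) \subset L(T) \subset L(T)(\alpha) \eqdef M$, where $\alpha$ is a root of $F$; here $[M : L(T)] = d$ and $[L(T) : K(T)] = [L:K]$, so $[M : K(T)] = d\,[L:K]$. Choosing a primitive element of $M/K(T)$ whose minimal polynomial has coefficients in $K[T]$, and applying the classical HIT over $K$ to it, produces $\lambda \in K$ for which the specialized field $M_\lambda$ has full degree $d\,[L:K]$ over $K$. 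Since the elements of $L$ are constants and survive the specialization unchanged, $L \subseteq M_\lambda$ and thus $[M_\lambda : L] = d$; as $M_\lambda = L(\alpha(\lambda))$ with $\alpha(\lambda)$ a root of $F(\lambda, X)$, this says exactly that $F(\lambda, X)$ is irreducible of degree $d$ over $L$, which is what the contradiction requires.
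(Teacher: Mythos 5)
Your proof is correct and follows essentially the same route as the paper: both apply Hilbert's irreducibility theorem to the pencil $P + TQ$, the only real content being the descent from a Hilbertian statement over $L$ to one over $K$. The paper packages this via thin sets (the image of the degree-$d$ map $\frac{P}{Q} : \mathbb{P}^1_L \to \mathbb{P}^1_L$ cannot contain all of $K$ if $d>1$) and cites \cite[Corollary~12.2.3]{fieldar} for the descent, whereas you re-derive that descent by hand with a primitive element of $L(T)(\alpha)|K(T)$ --- a correct, if longer, substitute for the citation.
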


\begin{proof}
  Let $f = \frac PQ \in L(X)$ and $D = \max(\deg P, \, \deg Q)$.
  Then, $f$ defines a morphism $\mathbb{P}^1_L \to \mathbb{P}^1_L$ of degree $D$ and, by hypothesis, all of $K$ is contained in $f(\mathbb{P}^1(L))$.
  Assume that $D > 1$, so that~$f(\mathbb{P}^1(L))$ is a thin set, and its complement is a Hilbertian subset of~$\mathbb{P}^1(L)$ containing no element of~$K$.
  By \cite[Corollary~12.2.3]{fieldar}, this Hilbertian set contains a Hilbertian subset of $\mathbb{P}^1(K)$.
  Thus, the empty set (in~$\mathbb{P}^1(K)$) is Hilbertian, contradicting the fact that $K$ is Hilbertian.
  We have shown that~$D=1$.
\end{proof}


As a consequence, a version of \Cref{cor-codimk-nonkfree} holds also for $k=1$ in the case of Hilbertian fields:

\begin{corollary}
  \label{cor-codim1-root-nf}
  Linear $K$-subspaces of $L[X]$ of codimension $[L:K]$ containing only polynomials with a root in~$L$ are exactly subspaces of the form $(X-x) L[X]$ for some $x \in L$.
\end{corollary}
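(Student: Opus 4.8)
The plan is to prove both inclusions, the reverse one being immediate and the forward one the substance. For $x \in L$, the subspace $(X-x)L[X]$ is the kernel of the surjective $L$-linear evaluation map $\mathrm{ev}_x \colon L[X] \to L$, $P \mapsto P(x)$, so it has $K$-codimension $\dim_K L = [L:K]$ and every element has the root $x$; this gives one inclusion. For the converse, let $V$ be a $K$-subspace of $L[X]$ of codimension $n := [L:K]$ all of whose elements have a root in $L$, and let $W_1 = L \oplus LX$ (of $K$-dimension $2n$) be the space of polynomials of degree $\le 1$. I would first analyse $U := V \cap W_1$: since $V$ has codimension $n$ in $L[X]$, the natural injection $W_1/U \hookrightarrow L[X]/V$ shows $U$ has codimension at most $n$ in $W_1$, so $\dim_K U \ge n$; on the other hand a nonzero constant has no root in $L$, so $U$ meets the constants $L \oplus 0$ only in $0$, whence $\dim_K U \le 2n - n = n$. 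Thus $\dim_K U = n$, every nonzero element of $U$ has degree exactly $1$, and $V$ contains no nonzero constant. In particular $U \ne 0$; I fix a nonzero $Q_0 \in U$, which is a degree-$1$ polynomial with a unique root $x_0 \in L$.

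The engine is \Cref{lem-hit}. If $P \in V$ has degree $\ge 2$ and is coprime to a degree-$1$ element $Q \in V$, then $P + \lambda Q \in V$ has a root in $L$ for every $\lambda \in K$, so the lemma would force $\deg P \le 1$, a contradiction. Hence $P$ and $Q$ cannot be coprime, i.e.\ the root of $Q$ is a root of $P$. In other words, every element of $V$ of degree $\ge 2$ vanishes at the root of every degree-$1$ element of $V$.

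The crux is then to show that all degree-$1$ elements of $V$ share a single root. Suppose, for contradiction, that some degree-$1$ element $Q_1 \in V$ has a root $r_1 \ne x_0$. By the previous step, every $P \in V$ of degree $\ge 2$ is divisible both by $X - x_0$ and by $X - r_1$, hence by $D := (X - x_0)(X - r_1)$. Therefore $V$ is the set-theoretic union of its two subspaces $U$ and $V \cap D\,L[X]$ (every element of $V$ has degree $\le 1$, so lies in $U$, or degree $\ge 2$, so lies in $V \cap D\,L[X]$). Since a vector space that is the union of two subspaces equals one of them, and $\dim_K U = n$ is finite while $\dim_K V$ is infinite, we must have $V \subseteq D\,L[X]$; but $D\,L[X]$ has codimension $2n > n$, contradicting the codimension of $V$ being $n$. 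I expect this to be the main obstacle: it is precisely the interplay between \Cref{lem-hit} (forcing the high-degree elements of $V$ to be divisible) and the codimension bookkeeping that pins all degree-$1$ elements of $V$ to one common root.

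Consequently every degree-$1$ element of $V$ has root $x_0$. Applying the engine with the degree-$1$ element $Q_0$, every element of $V$ of degree $\ge 2$ also vanishes at $x_0$; and $V$ contains no nonzero constant. Hence every element of $V$ vanishes at $x_0$, i.e.\ $V \subseteq \ker \mathrm{ev}_{x_0} = (X - x_0)L[X]$, and since both spaces have codimension $n$ they coincide. This completes the forward inclusion and hence the proof.
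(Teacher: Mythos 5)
Your proof is correct and follows essentially the same strategy as the paper's: a dimension count showing the degree-$\le 1$ part of $V$ has dimension $[L:K]$ and consists of genuine degree-$1$ polynomials, then \Cref{lem-hit} to force every degree-$1$ element of $V$ to divide every element of $V$ of degree $\ge 2$, and finally a codimension comparison. The only (equally valid) variation is how the common root is pinned down: the paper fixes a degree-$2$ element of $V$ and observes that the degree-$\le 1$ part is covered by the two lines attached to its roots, whereas you suppose two degree-$1$ elements have distinct roots $x_0 \ne r_1$ and reach a contradiction because $(X-x_0)(X-r_1)L[X]$ has codimension $2[L:K]$ --- both arguments ultimately rest on the fact that a vector space is not a union of two proper subspaces.
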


\begin{proof}
  Let $V$ be such a linear subspace.
  Since $V$ has codimension $[L:K]$, which is the dimension of the space of polynomials of degree $\leq 0$, and since nonzero polynomials of degree $\leq 0$ do not have a root in $L$, we have $V \oplus L = L[X]$.
  The space $V'$ of polynomials of degree $\leq 1$ in~$V$ has dimension exactly $[L:K]$, and nonzero elements of $V'$ are polynomials with degree exactly $1$.

  Let $P \in V' \setminus \{0\}$ and $Q \in V$ of degree $\geq 2$, and assume that~$P$ does not divide~$Q$.
  Then, $Q$ is coprime to $P$ since $\deg P = 1$.
  Moreover, $P + \lambda Q$ belongs to $V$ and hence has a root in $L$, for all $\lambda \in K$.
  But then, \Cref{lem-hit} implies $\deg Q \leq 1$, which contradicts the hypothesis.
  Therefore, every element of $V' \setminus \{0\}$ divides every element of $V$ of degree~$\geq 2$.

  Fix any polynomial $Q_0 \in V$ of degree $2$, which is split since it has a root in~$L$, and let $x_1, x_2$ be its roots in $L$ (which may be equal).
  As seen previously, any $P \in V' \setminus \{0\}$ divides $Q_0$, which means that $V'$ is covered by the two vector spaces $L \cdot (X-x_1)$ and $L \cdot (X-x_2)$.
  As the field $K$ is infinite, this implies $V' \subseteq L \cdot (X-x_i)$ for some $i \in \{1,2\}$.
  Since these spaces both have dimension $[L:K]$, we have the equality $V' = L \cdot (X-x_i)$.
  To show that all polynomials in $V$ are divisible by $X-x_i$, remark that they either belong to $V' = L \cdot (X-x_i)$, or they are of degree $\geq 2$ and are then divisible by $X-x_i \in V' \setminus \{0\}$.
  We have proved that $V \subseteq (X-x_i)L[X]$.
  Since these spaces have the same codimension $[L:K]$, this is an equality.
\end{proof}

Finally, we prove \Cref{thm-root-nf}:

\begin{proof}[Proof of \Cref{thm-root-nf}]
  Let $f \in \Aut_{K \Vect}^{\textnormal{root}} (L[X])$.
  For all $x \in L$, the image of $(X-x)L[X]$ under~$f$ is a $K$-subspace of $L[X]$ of codimension $[L:K]$ consisting only of polynomials with a root in~$L$.
  By \Cref{cor-codim1-root-nf}, this implies that there is a map $\mu \colon L \to L$ such that
  $
    f\!\bigl(
      (X-x)L[X]
    \bigr)
    =
    \bigl(
      X-\mu(x)
    \bigr)L[X]
  $ for all $x \in K$.
  Doing the same for $f^{-1}$ shows that $\mu$ is bijective.
  Let $S$ be the finite set
  $
    \leftl\{
      \mu^{-1}(x)
      \verti
      f(1)(x)=0
    \rightr\}
    \cup
    \leftl\{
      \mu(x)
      \verti
      f^{-1}(1)(x)=0
    \rightr\}
  $
  and $L' = L\setminus S$.
  Applying \Cref{thm-root-pres} to the triples $(S, \mu|_{L'}, f)$ and $(S, \mu^{-1}|_{L'}, f^{-1})$ shows that there are polynomials $Y_1, Y_2 \in L[X]$ and automorphisms $\sigma, \tau \in \Aut(L|K)$ such that:
  \[
    \forall P \in L[X],\quad
    f(P) = f(1) \cdot (\sigma(P) \circ Y_1)
    \andd
    f^{-1}(P) = f^{-1}(1) \cdot (\tau(P) \circ Y_2)
  \]
  and such that
  $
    \leftl\lbrace
      \begin{array}{ccl}
        \sigma & \text{coincides with} & Y_1 \circ \mu \\
        \tau & \text{coincides with} & Y_2 \circ \mu^{-1}
      \end{array}
    \rightr.
  $ on $L'$.
  Since $f$ is surjective, $f(1)$ is a nonzero constant~$c \in L^{\times}$, and $f^{-1}(1) = \frac{1}{\sigma^{-1}(c)}$.
  At all but finitely many points, we have the equality $\sigma \circ \tau = \sigma \circ Y_2 \circ \mu^{-1} = \sigma(Y_2) \circ \sigma \circ \mu^{-1} = \sigma(Y_2) \circ Y_1$.
  But then the polynomial $\sigma(Y_2) \circ Y_1 \in L[X]$ coincides with the identity $X \in L[X]$ when evaluated at all but finitely many elements of $K$ (forming an infinite subset of $L$).
  This implies the algebraic equality $\sigma(Y_2) \circ Y_1 = X$ and thus $\sigma \circ \tau = \id$, i.e., $\tau = \sigma^{-1}$.
  Moreover, the equality $\sigma(Y_2) \circ Y_1 = X$ in $L[X]$ implies that both $Y_1$ and $Y_2$ are polynomials of degree~$1$.
  If we write $Y_1 = a X + b$, then
  $Y_2 = \sigma^{-1}\leftl(\frac{X-b}{a}\rightr) = \frac{X-\sigma^{-1}(b)}{\sigma^{-1}(a)}$.
\end{proof}

\section{Symmetries of the set of polynomials with a real root}
\label{sn-root-real}

The goal of this section is to prove the following theorem:
\begin{theorem}
  \label{thm-root-real}
  Let $f$ be a $\R$-linear bijective map $\R[X] \to \R[X]$ preserving the set of polynomials with a real root.
  Then, there are real numbers $a, c \in \R^{\times}$ and $b \in \R$ such that:
  \[
    \forall P \in \R[X], \quad
    f(P) = c \cdot P \circ (a X + b).
  \]
  In other words, $\Aut_{\R \Vect}^{\textnormal{root}} (\R[X]) = \R^{\times} \rtimes \Aut_{\R \alg} (\R[X])$.
\end{theorem}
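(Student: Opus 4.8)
The plan is to reduce \Cref{thm-root-real} to \Cref{thm-kfree-kroot} with $k=2$, by showing that $f$ induces on the subspaces $(X-x)^2\R[X]$ exactly the correspondence that \Cref{cor-dblroot-exist-mu} produces in the $k$-free setting. Concretely, I would construct a bijection $\mu\colon\R\to\R$ with $f\big((X-x)^2\R[X]\big)=(X-\mu(x))^2\R[X]$ for every $x\in\R$. This is precisely the data needed to apply \Cref{thm-sub-main} with $k=2$, $S=\emptyset$, $f_1=f$, $f_2=f^{-1}$: the subspace correspondence says $x$ is a $2$-fold root of $P$ iff $\mu(x)$ is a $2$-fold root of $f(P)$, and symmetrically for $f^{-1}$. \Cref{thm-sub-main} then yields constants $a\in\R^\times$, $b\in\R$ with $f(P)=f(1)\cdot P\circ(aX+b)$, and $f(1)=c\in\R^\times$ by surjectivity. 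Building this correspondence is the concrete form of the promised inclusion $\Aut_{\R\Vect}^{\textnormal{root}}(\R[X])\subseteq\Aut_{\R\Vect}^{2\rroot}(\R[X])$.

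The first key step is to control signs. Since $f$ preserves the set of polynomials with a real root, it preserves the complementary set of definite polynomials, which is the disjoint union of the convex cone $\R[X]^{>0}$ of positive definite polynomials and its opposite $\R[X]^{<0}$. A straight segment between two elements of $\R[X]^{>0}$ consists of positive definite polynomials, so its image under $f$ is again a segment lying in the complement of the real-root set; since no segment inside that complement can join $\R[X]^{>0}$ to $\R[X]^{<0}$ (an intermediate polynomial would vanish at a fixed point), the image of the connected set $\R[X]^{>0}$ lies in a single piece. Replacing $f$ by $-f$ if necessary (which preserves the hypothesis and the shape of the conclusion), I may assume $f(\R[X]^{>0})=\R[X]^{>0}$. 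Now if $P\geq 0$ has a real root, then $P+\epsilon$ is positive definite for every $\epsilon>0$, hence $f(P)+\epsilon f(1)=f(P+\epsilon)$ is positive definite; letting $\epsilon\to 0^{+}$ gives $f(P)\geq 0$ pointwise, while $f(P)$ still has a real root because $P$ does. Thus $f$ sends nonnegative polynomials with a real root to nonnegative polynomials with a real root.

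I would then reconstruct the subspaces. For fixed $x$, let $E_x$ be the set of nonnegative polynomials vanishing at $x$; each such polynomial has $x$ as a root of even multiplicity, so $E_x\subseteq(X-x)^2\R[X]$, and conversely every $(X-x)^2Q$ is a difference of two elements of $E_x$ (write $Q$ as a difference of two positive definite polynomials, e.g.\ by adding a large multiple of $(X^2+1)^m$), so that $(X-x)^2\R[X]=\mathrm{span}\,E_x$. By the previous paragraph $f(E_x)$ consists of nonnegative polynomials with a real root, and $E_x$ is closed under addition, so for any finite family $P_1,\dots,P_n\in E_x$ the polynomial $f(P_1)+\dots+f(P_n)=f(P_1+\dots+P_n)$ is nonnegative with a real root; being a sum of nonnegative polynomials, it vanishes exactly on the intersection of the (finite) zero sets of the $f(P_i)$, which is therefore nonempty. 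These finite zero sets thus enjoy the finite intersection property and share a common point $\mu(x)$; as each $f(P)$ is nonnegative, $\mu(x)$ is a double root of it, so $f\big((X-x)^2\R[X]\big)=\mathrm{span}\,f(E_x)\subseteq(X-\mu(x))^2\R[X]$, with equality by comparing codimensions. Running the same argument for $f^{-1}$ shows that $\mu$ is a bijection.

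The main obstacle is the common-zero step: a priori the polynomials $f(P)$ for $P\in E_x$ share zeros only pairwise, and pairwise-meeting finite sets need not have a common point. Two ingredients defuse this. The sign-control step guarantees that every $f(P)$ is genuinely nonnegative, so that a zero of a sum $\sum f(P_i)$ is a \emph{common} zero of the summands rather than a point of cancellation; and closure of $E_x$ under addition upgrades this, via the finite intersection property for finite zero sets, from finitely many summands to the whole family. Once the bijection $\mu$ and the subspace correspondence are established, the passage to \Cref{thm-sub-main} is purely formal, exactly as in the proof of \Cref{thm-kfree-kroot}.
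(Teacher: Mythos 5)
Your proposal is correct, and for the core of the argument it takes a genuinely different route from the paper, even though both proofs share the same endgame: establish the correspondence $f\bigl((X-x)^2\R[X]\bigr)=(X-\mu(x))^2\R[X]$ and feed it to \Cref{thm-sub-main} with $k=2$. The paper gets there by first proving that $f$ preserves the set of polynomials of each fixed degree (via the sets $V(P,Q)$ and $S(P)$ of \Cref{subsn-vpq-sp}) and that $P$ and $f(P)$ have the same image; after normalizing $f(1)=1$ and $f(X)=X$ it pins down $f\bigl((X-u)^2\bigr)=(X-u)^2$ by an explicit computation with the parabola $b^2=4c$ (\Cref{lem-second-degree}), and then upgrades this to the full subspace $(X-u)^2\R[X]$ via \Cref{lem-dblroot-evenposlead} (which rests on \Cref{lem-kfree-to-kroot}) and \Cref{lem-diff-even-pol}; in particular it only ever invokes \Cref{thm-sub-main} with $\mu=\id$. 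You instead skip the degree analysis entirely: the segment/intermediate-value argument on the two cones of definite polynomials gives (after a harmless sign flip) that $f$ sends nonnegative polynomials with a real root to nonnegative polynomials with a real root, and your reconstruction of $(X-x)^2\R[X]$ as the span of the additively closed cone $E_x$, combined with the finite-intersection-property argument on the zero sets of the $f(P)$, produces the common double root $\mu(x)$ directly --- playing the role that \Cref{cor-dblroot-exist-mu} plays in the $k$-free setting. Your version exploits the full strength of \Cref{thm-sub-main} (which tolerates an arbitrary bijection $\mu$) and replaces the quadratic computation and the degree bookkeeping by a softer convexity argument; the paper's version is longer but yields extra structural information along the way (degree and image preservation) and reuses lemmas already developed for \Cref{thm-kfree-kroot}. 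The only points worth tightening in a written version are small: the ``connectedness'' of the cone of positive definite polynomials should be phrased via segments as you do (there is no ambient topology in play), the zero polynomial should be excluded when you speak of finite zero sets in the FIP step, and the claim $f(\R[X]^{>0})=\R[X]^{>0}$ (rather than mere inclusion) needs the one-line companion argument for $f^{-1}$ --- though only the inclusion is actually used.
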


In \Cref{subsn-vpq-sp}, we prove lemmas which imply that maps in $\Aut_{\R \Vect}^{\textnormal{root}} (\R[X])$ preserve the set of polynomials with a given degree, and in \Cref{subsn-quad-dblroot} we show that these maps preserve the set of polynomials which have a real double root.
In \Cref{subsn-proof-root-real}, we prove \Cref{thm-root-real} by combining the lemmas from the first two subsections with \Cref{thm-sub-main}.

\begin{remark}
  Without the bijectivity hypothesis, there are additional maps even if we require $f(1)=~1$.
  For instance, the map $f \colon P \mapsto P \circ Q$, where $Q \in \R[X]$ is a non-constant polynomial whose derivative has no real roots of odd multiplicity (e.g. $Q=X^3$), preserves the set of polynomials with a real root.
\end{remark}

\begin{remark}
  The naive variant of \Cref{cor-codimk-nonkfree}/\Cref{cor-codim1-root-nf} does not hold for real roots: there are many linear subspaces of $\R[X]$ of codimension $1$ consisting only of polynomials with a real root, and which are not of the form $(X-x)\R[X]$.
  Here is a way to construct such subspaces: let $\mu$ be a nonzero positive measure on $\R$ with all moments finite (for example, such that its support $\mathrm{Supp}(\mu)$ is compact), and let~$L$ be the subspace (of codimension~$1$) of polynomials $P \in \R[X]$ such that~$\int_{\R} P \, \mathrm{d} \mu = 0$.
  If $P \in L$, then either $P$ vanishes everywhere on $\mathrm{Supp}(\mu)$ in which case~$P$ has a real root, or $P$ must take values of both signs on $\mathrm{Supp}(\mu)$ for the integral to vanish, so~$P$ has a zero in the convex hull of $\mathrm{Supp}(\mu)$ by the intermediate value theorem.
  Thus, all polynomials~$P \in L$ have a real root.
  For instance, if~$\mu$ is the sum of two Dirac measures concentrated respectively at~$-1$ and~$1$, then~$L$ is the space of polynomials~$P$ satisfying $P(1)+P(-1)=0$, which all have a real root; that space is spanned by the polynomials $X, X^2-1, X^3, X^4-1, X^5, X^6-1, \dots$
\end{remark}

\subsection{The sets $V(P,Q)$ and $S(P)$}
\label{subsn-vpq-sp}

In this subsection, we describe properties of the following sets:
\begin{itemize}
  \item
    for $P, Q \in \R[X]$, we let
    $
      V(P,Q)
      \coloneqq
      \leftl\{
        \lambda \in \R
        \verti
        P - \lambda Q \text{ has a real root}
      \rightr\}
    $
  \item
    for $P \in \R[X]$, we let
    $
      S(P)
      \coloneqq
      \leftl\{
        Q \in \R[X]
        \verti
        V(Q,P) \text{ is bounded}
      \rightr\}
    $
\end{itemize}
In \Cref{lem-even-degree}, we characterize the set of polynomials of degree $\leq n$ for any even $n$ in terms of the sets~$S(P)$.
This is later used to prove that any map $f \in \Aut_{\R \Vect}^{\textnormal{root}} (\R[X])$ preserves the set of polynomials of a given degree.

\begin{lemma}
  \label{lem-descr-VPQ}
  Let $P,Q \in \R[X]$.
  If $P$ and $Q$ have a common real root, then $V(P,Q) = \R$.
  Otherwise, $V(P,Q)$ is the image of $\frac PQ$.
\end{lemma}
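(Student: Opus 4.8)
The plan is to unwind the definition of $V(P,Q)$ and argue by a direct case analysis. By definition, $\lambda \in V(P,Q)$ holds exactly when there is some $x \in \R$ with $P(x) - \lambda Q(x) = 0$, i.e. $P(x) = \lambda Q(x)$; everything will follow from tracking this single equation.

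First I would dispose of the case where $P$ and $Q$ have a common real root $x_0$. Then $P(x_0) = Q(x_0) = 0$, so $P(x_0) - \lambda Q(x_0) = 0$ holds for every $\lambda \in \R$, which immediately gives $V(P,Q) = \R$.

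For the remaining case, I assume $P$ and $Q$ have no common real root, and recall that the image of the rational function $\frac PQ$ is the set $\{P(x)/Q(x) : x \in \R,\ Q(x) \neq 0\}$. The inclusion of this image into $V(P,Q)$ is immediate and uses no hypothesis: if $\lambda = P(x)/Q(x)$ with $Q(x) \neq 0$, then $P(x) - \lambda Q(x) = 0$, so $\lambda \in V(P,Q)$. For the reverse inclusion, I take $\lambda \in V(P,Q)$ witnessed by some $x \in \R$ with $P(x) = \lambda Q(x)$. The one subtle point — and the only place the hypothesis is needed — is that $Q(x) \neq 0$: were $Q(x) = 0$, the equation would force $P(x) = 0$ as well, making $x$ a common real root and contradicting the assumption. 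Hence $Q(x) \neq 0$, and dividing yields $\lambda = P(x)/Q(x)$, which exhibits $\lambda$ in the image of $\frac PQ$.

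I do not expect any real obstacle here, as the statement is essentially a reformulation of the defining equation. The only care needed is to rule out, in the no-common-root case, that the witnessing real root of $P - \lambda Q$ is a pole of $\frac PQ$; this is precisely where the common-root hypothesis is used, and it is what separates the two cases.
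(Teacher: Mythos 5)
Your proof is correct and follows essentially the same route as the paper: handle the common-root case directly, then prove both inclusions, with the only nontrivial point being that the witnessing root of $P - \lambda Q$ cannot be a zero of $Q$ when $P$ and $Q$ share no real root. Nothing to add.
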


\begin{proof}
  If $P(x)=Q(x)=0$, then $P -\lambda Q$ has the real root $x$ for all $\lambda \in \R$, which shows $V(P,Q) = \R$.
  Assume now that~$P$ and~$Q$ have no common real root.
  Let $y$ be in the image of $\frac PQ$, i.e., $P(x) = y Q(x)$ for some real number $x$ such that $Q(x) \neq 0$.
  Then, $P - y Q$ has the real root $x$.
  Conversely, if $P - y Q$ has the real root $x$, then $P(x)=y Q(x)$.
  If $Q(x)=0$, then $P(x)=0$ contradicts the fact that $P$ and~$Q$ have no common real root.
  Therefore, $Q(x) \neq 0$ and $\frac {P(x)}{Q(x)} = y$.
\end{proof}




\begin{lemma}
  \label{lem-VQP-bounded}
  Let $P \in \R[X]$.
  If $P$ has a real root, then $S(P)$ is empty.
  Otherwise, $S(P)$ is the space of polynomials of degree $\leq \deg P$.
\end{lemma}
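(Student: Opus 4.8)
The plan is to reduce the statement to the explicit description of $V(Q,P)$ provided by \Cref{lem-descr-VPQ}, turning the question into one about boundedness of the rational function $Q/P$. Recall that $Q \in S(P)$ means precisely that $V(Q,P)$ is bounded, that $V(Q,P) = \R$ whenever $P$ and $Q$ share a real root, and that otherwise $V(Q,P)$ equals the image of the function $x \mapsto Q(x)/P(x)$.

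First I would handle the case where $P$ has a real root $x_0$, aiming to show $S(P) = \emptyset$. Fix an arbitrary $Q \in \R[X]$. If $P$ and $Q$ share a real root, then $V(Q,P) = \R$ is unbounded. If they do not, then $V(Q,P)$ is the image of $Q/P$; but since $P(x_0) = 0$ while $Q(x_0) \neq 0$, the quotient $\left| Q(x)/P(x) \right|$ tends to $+\infty$ as $x \to x_0$, so this image is again unbounded. In either case $Q \notin S(P)$, giving $S(P) = \emptyset$.

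Next I would treat the case where $P$ has no real root. Then $P$ never vanishes on $\R$, so $P$ and any $Q$ have no common real root, and \Cref{lem-descr-VPQ} yields that $V(Q,P)$ is exactly the image of the continuous function $Q/P \colon \R \to \R$. Its boundedness is then decided by the asymptotic behavior at $\pm \infty$: when $\deg Q \leq \deg P$, the ratio $Q(x)/P(x)$ admits finite limits as $x \to \pm \infty$ (namely $0$ when the degrees differ, or the ratio of leading coefficients when they agree), so the continuous function $Q/P$ is bounded and $Q \in S(P)$; when $\deg Q > \deg P$, we have $\left| Q(x)/P(x) \right| \to +\infty$, so $V(Q,P)$ is unbounded and $Q \notin S(P)$. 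This shows that $S(P)$ is precisely the space of polynomials of degree $\leq \deg P$.

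The argument involves no serious obstacle: the only points requiring (routine) care are the elementary facts that a continuous function on $\R$ with finite limits at $\pm \infty$ is bounded, and that the ratio of two polynomials behaves like a monomial of degree $\deg Q - \deg P$ at infinity. The piece that makes everything work cleanly is the dichotomy in \Cref{lem-descr-VPQ}, which collapses the ``common real root'' situation to $V(Q,P) = \R$ and thereby forces $S(P) = \emptyset$ as soon as $P$ has a real root.
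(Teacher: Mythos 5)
Your proof is correct and follows essentially the same route as the paper: both reduce via \Cref{lem-descr-VPQ} to the boundedness of the function $Q/P$, dispose of the case where $P$ has a real root by noting the quotient blows up near that root (or $V(Q,P)=\R$ if the root is shared), and in the remaining case read off boundedness from the behavior at $\pm\infty$, i.e., from the comparison of degrees.
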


\begin{proof}
  By \Cref{lem-descr-VPQ}, the set $S(P)$ is the set of polynomials having no common real root with $P$, and such that the function $\frac Q P$ is bounded.
  If $P$ has a real root $x$, then for every polynomial~$Q$ sharing no real root with $P$, the function $\frac Q P$ has asymptotes at $x$ and hence takes unbounded values; thus, $S(P)=\emptyset$.
  Now, assume that~$P$ has no real root.
  Then, $S(P)$ is the set of polynomials $Q$ such that the function $\frac Q P$ (which has no asymptotes at any real number) is bounded on $\R$; this amounts to the fact that it does not become infinite at $\pm \infty$, i.e., that $\deg Q \leq \deg P$.
\end{proof}

\begin{lemma}
  \label{lem-even-degree}
  Let $n$ be an even integer and $V$ be a subspace of $\R[X]$.
  Then, $V$ is the space of polynomials of degree at most $n$ if and only if $\dim(V) = n+1$ and $V = S(P_0)$ for some $P_0 \in V$.
\end{lemma}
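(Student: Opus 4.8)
The plan is to reduce everything to \Cref{lem-VQP-bounded}, which already computes $S(P)$ completely: it is empty if $P$ has a real root, and otherwise equals the space of polynomials of degree at most $\deg P$. Both implications of the stated equivalence should then follow from this description together with an elementary dimension count, with the parity of $n$ entering only to guarantee the existence of a suitable test polynomial.

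For the forward implication, suppose $V$ is the space of polynomials of degree at most $n$, so that $\dim V = n+1$ is immediate. It remains to exhibit a single $P_0 \in V$ with $S(P_0) = V$. Here I would use that $n$ is even to pick a degree-$n$ polynomial with no real root, for instance $P_0 = X^n + 1$ (which is everywhere positive) or $P_0 = (X^2+1)^{n/2}$. Such a $P_0$ lies in $V$ since $\deg P_0 = n$, and \Cref{lem-VQP-bounded} gives $S(P_0) = \{Q : \deg Q \leq n\} = V$ directly.

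For the backward implication, suppose $\dim V = n+1$ and $V = S(P_0)$ for some $P_0 \in V$. Since $\dim V = n+1 \geq 1$, the set $S(P_0)$ is nonempty, so \Cref{lem-VQP-bounded} forces $P_0$ to have no real root and identifies $S(P_0)$ with the space of polynomials of degree at most $\deg P_0$, a space of dimension $\deg P_0 + 1$. Comparing with $\dim V = n+1$ yields $\deg P_0 = n$, and therefore $V = S(P_0) = \{Q : \deg Q \leq n\}$, as desired.

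I expect no serious obstacle: the entire argument is a matter of invoking \Cref{lem-VQP-bounded} and matching dimensions. The only point requiring care is the forward direction, where the evenness of $n$ is essential — a degree-$n$ polynomial with no real root exists precisely when $n$ is even, and without such a test polynomial no $P_0 \in V$ could satisfy $S(P_0) = V$ (any root-free $P_0 \in V$ would then have even degree strictly below $n$, giving $S(P_0) \subsetneq V$).
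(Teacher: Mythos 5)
Your proof is correct and follows essentially the same route as the paper: both directions use the explicit witness $P_0 = X^n+1$ and the description of $S(P_0)$ from \Cref{lem-VQP-bounded}, with a dimension count giving $\deg P_0 = n$ in the converse. Nothing to add.
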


\begin{proof}
  If $V$ is the space of polynomials of degree at most $n$, which has dimension $n+1$, then the polynomial $P_0 = X^n + 1 \in V$ has no real root and has degree $n$, and hence $S(P_0) = V$ by \Cref{lem-VQP-bounded}.
  Conversely, assume that $\dim(V) = n+1$ and that $V= S(P_0)$ for some $P_0 \in V$.
  By \Cref{lem-VQP-bounded}, the polynomial $P_0$ has no real root and $V$ is the space of polynomials of degree $\leq \deg P_0$.
  Since $\dim V = n+1$, we have $\deg P_0 = n$.
\end{proof}

\subsection{Quadratic polynomials and double roots}
\label{subsn-quad-dblroot}

In this subsection, we characterize polynomials having a given $x \in \R$ as a double root using only the ``additive'' structure of polynomials and the property of having a real root (assuming we can ``recognize'' polynomials of degrees $0$, $1$ and $2$, and nonnegative polynomials).

\begin{lemma}
  \label{lem-second-degree}
  Let $x \in \R$, and let $P$ be a polynomial of degree $2$ such that $P + b (X-x) + c$ has a real root if and only if $b^2 \geq 4c$.
  Then, $P=(X-x)^2$.
\end{lemma}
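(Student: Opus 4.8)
The plan is to reduce the hypothesis to an elementary comparison of two regions in the $(b,c)$-plane cut out by quadratic discriminants. First I would write $P$ in the basis of powers of $X-x$, say $P = a(X-x)^2 + d(X-x) + e$ with $a \in \R^{\times}$ (since $P$ has degree $2$) and $d,e \in \R$. Then
\[
  P + b(X-x) + c = a(X-x)^2 + (b+d)(X-x) + (c+e)
\]
is again a genuine quadratic in $X-x$, so it has a real root if and only if its discriminant $(b+d)^2 - 4a(c+e)$ is nonnegative. The hypothesis thus becomes the assertion that, for all $b,c \in \R$,
\[
  (b+d)^2 - 4a(c+e) \geq 0
  \iff
  b^2 - 4c \geq 0 .
\]

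Next I would pin down the sign of the leading coefficient $a$ by specializing to $b=0$. The right-hand condition then reads $c \leq 0$, describing a ray unbounded below, while the left-hand condition reads $d^2 - 4a(c+e) \geq 0$, i.e.\ a ray in $c$ that is bounded below when $a<0$ and bounded above when $a>0$. Since the two rays must coincide and the target one is unbounded below, the case $a<0$ is impossible; hence $a>0$.

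With $a>0$ in hand, both conditions become of the form ``$c$ bounded above by a parabola in $b$'': explicitly $c \leq \frac{(b+d)^2 - 4ae}{4a}$ on the left and $c \leq \frac{b^2}{4}$ on the right. Equality of these regions forces, for each fixed $b$, the two upper bounds to agree, so $(b+d)^2 - 4ae = a\,b^2$ as polynomials in $b$. Comparing the coefficients of $b^2$, of $b$, and the constant terms yields successively $a=1$, then $d=0$, then $e=0$, whence $P = (X-x)^2$.

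The argument is essentially a routine discriminant computation, and I do not expect a serious obstacle. The one point that deserves care is the determination of the sign of $a$: it is precisely the positivity of the leading coefficient that makes both regions sublevel sets of upward-opening parabolas, so that equality of solution sets can be upgraded to equality of the defining parabolas. Overlooking the possibility $a<0$ would leave a genuine gap in the reduction.
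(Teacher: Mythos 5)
Your proof is correct and follows essentially the same route as the paper: write $P$ in powers of $X-x$, translate the hypothesis into the equivalence $(b+d)^2 \geq 4a(c+e) \iff b^2 \geq 4c$, deduce the polynomial identity $(b+d)^2 - 4ae = a b^2$ in $b$, and equate coefficients. The only difference is how that identity is extracted: the paper observes that the affine map $(b,c) \mapsto (b+d, a(c+e))$ preserves the region $b^2 \geq 4c$ and hence its boundary parabola, whereas you first pin down $a>0$ by specializing $b=0$ and then compare the fibers $(-\infty, g(b)]$ directly --- a slightly more elementary justification of the same step, and your care about the sign of $a$ is well placed.
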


\begin{proof}
  Write $P = \alpha (X-x)^2 + \beta (X-x) + \gamma$ with $\alpha \neq 0$.
  The hypothesis says that
  \[ 
    [ (\beta + b)^2 \geq 4 \alpha (\gamma + c) ]
    \iff
    [ b^2 \geq 4c ],
  \]
  i.e., the affine transformation
  $
  \leftl\{
    \begin{matrix}
      b & \mapsto & b + \beta \\
      c & \mapsto & \alpha c + \alpha \gamma
    \end{matrix}
  \rightr.$
  preserves the set
  $\leftl\{
    (b,c)
    \verti
    b^2 \geq 4 c
  \rightr\}$,
  and hence preserves its boundary, the parabola $b^2 = 4c$.
  This implies $(b+\beta)^2 = \alpha b^2 + 4 \alpha \gamma$ for all~$b \in \R$. 
  Equating the coefficients of the corresponding polynomials in $b$, we obtain $1 = \alpha$, $2 \beta = 0$ and $\beta^2 = 4 \alpha \gamma$, from which follows that $(\alpha, \beta, \gamma) = (1,0,0)$ and thus $P=(X-x)^2$.
\end{proof}



\begin{lemma}
  \label{lem-dblroot-evenposlead}
  Let $P$ be a polynomial of even degree with positive leading coefficient, and $x \in \R$.
  Then,~$x$ is a double root of $P$ if and only if there is a $\lambda_0 \in \R$ such that the polynomial $P + \lambda (X-x)^2$ has image $\R^+$ for any $\lambda > \lambda_0$.
\end{lemma}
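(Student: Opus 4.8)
The plan is to prove the two implications separately, writing throughout $P_\lambda \eqdef P + \lambda (X-x)^2$. Note that whenever $\deg P \geq 2$ the perturbed polynomial $P_\lambda$ has the same (even) degree and the same positive leading coefficient as $P$, and that the condition ``$P_\lambda$ has image $\R^+$'' unwinds into the conjunction of ``$P_\lambda(t) \geq 0$ for all $t \in \R$'' and ``$P_\lambda$ attains the value $0$''. Each direction uses only one of these two pieces at a time, played against the other.

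For the direct implication, suppose $x$ is a double root of $P$, so that $(X-x)^2$ divides $P$ and $P = (X-x)^2 R$ for some $R \in \R[X]$. Since $\deg R = \deg P - 2$ is even and $R$ inherits the positive leading coefficient of $P$, the polynomial $R$ tends to $+\infty$ at $\pm \infty$ and hence attains a global minimum $m \eqdef \min_{t \in \R} R(t)$. I would then take $\lambda_0 \eqdef -m$: for every $\lambda > \lambda_0$ one has $R + \lambda > 0$ everywhere, so $P_\lambda = (X-x)^2 (R + \lambda)$ is nonnegative and vanishes exactly at $x$. As $P_\lambda$ is continuous, equals $0$ at $x$, and tends to $+\infty$, the intermediate value theorem shows that its image is precisely $\R^+$.

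The converse is where the real content lies. Assume $\lambda_0$ is as in the statement, and fix any two parameters $\lambda_0 < \lambda_1 < \lambda_2$. Since $P_{\lambda_2}$ has image $\R^+$, it attains $0$ at some $y \in \R$, which is therefore a global minimizer of $P_{\lambda_2}$. The decisive computation is
\[
  P_{\lambda_1}(y) = P_{\lambda_2}(y) - (\lambda_2 - \lambda_1)(y-x)^2 = -(\lambda_2 - \lambda_1)(y-x)^2 \leq 0.
\]
But $P_{\lambda_1}$ is also nonnegative everywhere, its image being $\R^+$ as well, so this forces $(y-x)^2 = 0$, that is $y = x$. Hence $P(x) = P_{\lambda_2}(x) = 0$, and $x$ is a global minimizer of the differentiable function $P_{\lambda_2}$; consequently $0 = P_{\lambda_2}'(x) = P'(x) + 2\lambda_2(x-x) = P'(x)$. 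Thus $P(x) = P'(x) = 0$ and $x$ is a double root of $P$. (The degenerate case $\deg P = 0$ needs no separate treatment: a nonzero constant has no double root, and the displayed perturbation then never has image $\R^+$, so both sides of the equivalence fail.)

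The main obstacle is exactly this converse direction, and more precisely the step of \emph{locating} the root of $P_\lambda$: from the bare hypothesis that each $P_\lambda$ is a nonnegative polynomial attaining the value $0$, it is not clear that the zero occurs at the prescribed point $x$ rather than drifting as $\lambda$ varies. The idea that resolves this is to pit two parameters against each other, evaluating $P_{\lambda_1}$ at the minimizer of $P_{\lambda_2}$: the resulting quantity is simultaneously $\leq 0$ by the displayed identity and $\geq 0$ by nonnegativity, which pins the minimizer at $x$ and delivers both $P(x) = 0$ and $P'(x) = 0$ in one stroke, with no limiting arguments required.
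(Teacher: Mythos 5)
Your proof is correct, and the forward direction is essentially identical to the paper's (factor out $(X-x)^2$, add enough to the cofactor to make it everywhere positive). The converse, however, takes a genuinely different and more elementary route. The paper observes only that each $P_\lambda$ is a nonnegative polynomial attaining $0$, so its root has even multiplicity, hence multiplicity at least $2$; it then invokes \Cref{lem-kfree-to-kroot} with $k=2$, whose proof requires the non-$2$-freeness of $P+\lambda(X-x)^2$ for more than $\deg P$ values of $\lambda$ and a pigeonhole argument on irreducible factors of an auxiliary polynomial. Your two-parameter comparison --- evaluating $P_{\lambda_1}$ at the global minimizer $y$ of $P_{\lambda_2}$ to get $P_{\lambda_1}(y) = -(\lambda_2-\lambda_1)(y-x)^2 \leq 0$, hence $y = x$ by nonnegativity of $P_{\lambda_1}$ --- pins the root at $x$ using only \emph{two} admissible parameters, and then the first-order condition at the minimizer yields $P'(x)=0$ directly. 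This buys self-containedness and avoids the machinery of \Cref{lem-kfree-to-kroot}; what it gives up is reuse of a lemma the paper needs anyway, and it leans on real-analytic facts (existence of minimizers, vanishing derivative at an interior minimum) where the paper's route is more algebraic and would generalize beyond $\R$. Your handling of the degenerate constant case is also fine, though strictly speaking a degree-$0$ cofactor $R$ in the forward direction does not ``tend to $+\infty$''; it still has a global minimum, which is all you use.
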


\begin{proof}
  First assume that $x$ is a double root of $P$, and write $P=(X-x)^2 \tilde{P}$.
  Then, $P + \lambda (X-x)^2 = (\tilde{P} + \lambda)(X-x)^2$.
  Since $\tilde{P}$ is a polynomial of even degree and positive leading coefficient, it has a global minimum $m \in \R$.
  Taking $\lambda_0 = -m$, we have $\tilde{P}(t) + \lambda \geq 0$ for all $\lambda > \lambda_0$ and all~$t \in \R$, and then the polynomial $P + \lambda (X-x)^2$ has image $\R^+$ (it is nonzero, everywhere nonnegative, and vanishes at $x$).

  Conversely, assume that there is a $\lambda_0 \in \R$ such that the polynomial $P + \lambda (X-x)^2$ has image~$\R^+$ for any $\lambda > \lambda_0$.
  In particular, for any $\lambda > \lambda_0$, the polynomial $P + \lambda (X-x)^2$ has a root and since that polynomial is nonnegative, that root is of even multiplicity --- in particular, it has multiplicity at least $2$.
  Then, \Cref{lem-kfree-to-kroot} (with $k=2$) implies that $(X-x)^2$ divides $P$.
\end{proof}

\begin{lemma}
  \label{lem-diff-even-pol}
  Let $P \in \R[X]$ and $x \in \R$.
  Then, $x$ is a double root of $P$ if and only if there are polynomials $P_1, P_2 \in \R[X]$ of even degree and positive leading coefficient such that $P = P_1 - P_2$ and such that $x$ is a double root of both $P_1$ and $P_2$.
\end{lemma}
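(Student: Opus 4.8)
The plan is to prove both implications directly: the reverse one is immediate, and the forward one relies on a simple ``padding'' construction.

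For the reverse implication, suppose $P = P_1 - P_2$ where $x$ is a double root of each $P_i$. Then $(X-x)^2$ divides both $P_1$ and $P_2$, hence divides their difference $P$, so $x$ is a double root of $P$. This direction uses no hypothesis on degrees or leading coefficients whatsoever.

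For the forward implication, suppose $x$ is a double root of $P$, so that $P = (X-x)^2 R$ for some $R \in \R[X]$. The idea is to add to $P$ an auxiliary polynomial $P_2$ that is itself divisible by $(X-x)^2$, of even degree and positive leading coefficient, and of high enough degree to force $P_1 \eqdef P + P_2$ to inherit the same three properties. Concretely, I would fix an integer $N$ with $2N > \deg R$ and set $P_2 \eqdef (X-x)^2 X^{2N}$ and $P_1 \eqdef P + P_2 = (X-x)^2 (R + X^{2N})$. Since $2N > \deg R$, both $R + X^{2N}$ and $X^{2N}$ have degree $2N$ and leading coefficient $1$; after multiplying by $(X-x)^2$, both $P_1$ and $P_2$ then have even degree $2N+2$ and leading coefficient $1 > 0$. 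Moreover $(X-x)^2$ divides each $P_i$, so $x$ is a double root of both, and $P_1 - P_2 = P$ by construction. (The degenerate case $P = 0$ is covered by taking any $N$ and $P_1 = P_2 = (X-x)^2 X^{2N}$.)

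The statement presents essentially no obstacle: the only point requiring minor care is to arrange the three conditions (even degree, positive leading coefficient, double root at $x$) simultaneously on \emph{both} summands, which the single common high even-degree term $(X-x)^2 X^{2N}$ achieves at once. The role of this lemma is structural rather than computational: combined with \Cref{lem-dblroot-evenposlead}, it reduces the detection of an arbitrary real double root of $P$ to the case of polynomials of even degree with positive leading coefficient, to which the earlier criterion applies.
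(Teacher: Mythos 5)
Your proof is correct and uses essentially the same idea as the paper: the reverse direction is immediate, and the forward direction is handled by adding to $P$ a padding polynomial divisible by $(X-x)^2$ of large even degree with positive leading coefficient. The paper instead argues by cases on the degree and sign of $P$ (taking $(P_1,P_2)=(P,0)$ or $(0,-P)$ in the even-degree case and padding by $(X-x)^{n+1}$ in the odd-degree case); your uniform choice $P_2=(X-x)^2X^{2N}$ with $2N>\deg R$ covers all cases at once and even avoids the degenerate zero summand appearing in the paper's even-degree case.
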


\begin{proof}
  If $P=P_1-P_2$ and both $P_1$ and $P_2$ have $x$ as a double root, then $x$ is a double root of~$P$.
  Conversely, assume that $x$ is a double root of $P$.
  If $P = 0$, take $P_1 = P_2 = (X-x)^2$.
  If $P$ has even degree, take $(P_1, P_2) = (P, 0)$ if~$P$ has positive leading coefficient and $(P_1, P_2) = (0, -P)$ otherwise.
  If~$P$ has odd degree~$n$, take $P_1 = (X-x)^{n+1} + P$ and $P_2 = (X-x)^{n+1}$.
\end{proof}

\subsection{Proof of \Cref{thm-root-real}}
\label{subsn-proof-root-real}

Let $f \in \Aut_{\R \Vect}^{\textnormal{root}} (\R[X])$.
Note that, for all polynomials $P, Q \in \R[X]$:
\begin{equation}
  \label{eqn-real-vpq}
  V(f(P), f(Q)) = V(P, Q)
  \andd
  S(f(P)) = f(S(P)).
\end{equation}
By \Cref{lem-even-degree}, this implies that $f$ preserves the set of polynomials of degree $\leq n$ when~$n$ is even.
In particular, the case $n=0$ implies that $f$ maps constant polynomials to constant polynomials.
We reduce to the case $f(1)=1$ by replacing $f$ by $\frac f {f(1)}$.

Let $P \in \R[X]$.
Then, \Cref{eqn-real-vpq} implies that $V(f(P),1)=V(P,1)$, which by \Cref{lem-descr-VPQ} means that the polynomials $P$ and $f(P)$ have the same image.
In particular:
\begin{itemize}
  \item
    $f$ maps polynomials of odd degree (surjective polynomials) to polynomials of odd degree;
  \item
    $f$ maps polynomials of even degree to polynomials of even degree, with a leading coefficient of the same sign, and with the same global minimum/maximum.
    In particular, $f$ preserves the set of nonnegative polynomials.
\end{itemize}
The map $f$ then preserves the set of polynomials of degree $n$:
\begin{itemize}
  \item
    when $n$ is odd, because they are the polynomials whose degree is odd, $\leq n+1$ and not $\leq n-1$.
  \item
    when $n$ is even, because they are the polynomials whose degree is even, $\leq n$ and not $\leq n-2$.
\end{itemize}
In particular, $f(X)$ is a polynomial of degree $1$, which we write as $a X + b$ with $a \in \R^{\times}$ and $b \in \R$.
We reduce to the case~$f(X)=X$ by replacing $f$ by $P \mapsto f\leftl( P \circ \leftl( \frac{X-b}{a} \rightr) \rightr)$.

Let $u \in \R$.
By \Cref{lem-second-degree}, $f$ maps $(X-u)^2$ to $(X-u)^2$.
Then, \Cref{lem-dblroot-evenposlead} implies that~$f$ preserves the set of polynomials with even degree and positive leading coefficient having $u$ as a double root.
By \Cref{lem-diff-even-pol}, this implies that $f$ preserves the set $(X-u)^2 \R[X]$ of all polynomials having $u$ as a double root.
We conclude using \Cref{thm-sub-main} with $k=2$, $S=\emptyset$, $\mu = \id_\R$ and $(f_1, f_2) = (f, f^{-1})$.

\bibliographystyle{alphaurl}
\bibliography{ref.bib}
\end{document}